\documentclass[12pt,reqno]{amsart}

\usepackage[margin=1in]{geometry}
\usepackage{amsmath,amssymb,amsthm,mathtools}
\usepackage{booktabs}
\usepackage{colonequals}
\usepackage{xcolor}
\usepackage[pdfusetitle,colorlinks=true,linkcolor=blue,citecolor=blue,urlcolor=blue]{hyperref}

\newtheorem{theorem}{Theorem}
\newtheorem{lemma}[theorem]{Lemma}

\theoremstyle{definition}
\newtheorem{definition}[theorem]{Definition}
\theoremstyle{remark}
\newtheorem{remark}[theorem]{Remark}
\numberwithin{equation}{section}
\numberwithin{theorem}{section}

\newcommand{\C}{\mathbb C}
\newcommand{\D}{\mathbb D}
\newcommand{\E}{\mathbb E}
\newcommand{\N}{\mathbb N}
\newcommand{\A}{\mathcal A}
\newcommand{\R}{\mathbb R}

\newcommand{\ph}{\operatorname{phase}}

\newcommand{\ip}[2]{\left\langle #1,#2\right\rangle}
\newcommand{\sgn}{\mathrm{sign}}
\renewcommand{\epsilon}{\varepsilon}

\newcommand{\KG}{K_G^{\C}}
\newcommand{\norm}[1]{\lVert#1\rVert}
\newcommand{\diag}{\operatorname{diag}}
\newcommand{\Rea}{\operatorname{Re}}
\newcommand{\Hcal}{\mathcal H}

\title{Sharper Bounds for the Complex Grothendieck Constant}
\author{Steven Heilman, Chris Jones, Giulio Malavolta}
\date{\today}

\thanks{
Email: stevenmheilman@gmail.com, chijones@ucdavis.edu, giulio.malavolta@unibocconi.it\\
2020 Mathematics Subject Classification: 46B20, 60E15, 68W25\\
Keywords: rounding, semidefinite program, Grothendieck constant\\
Department of Mathematics, University of Southern California, Los Angeles, CA 90089\\
Department of Mathematics, University of California, Davis, One Shields Avenue, Davis, CA 95616\\
Department of Computing Sciences, Bocconi University, Via Sarfatti, 25, 20136 Milan, Italy}

\begin{document}

\begin{abstract}
We show that 
$
 1.4<K_G^{\mathbb{C}}
 <1.404898554746,
$
where $K_G^{\mathbb{C}}$ is the complex Grothendieck constant.  The upper bound improves on Haagerup's bound of $1.40490913\ldots$ from 1987, and the lower bound improves on Davie's bound of $1.33807$ from 1984.  The upper bound combines ideas from the real and noncommutative complex cases of Grothendieck's inequality.
\end{abstract}

\maketitle

\section{Introduction}

The complex Grothendieck constant $K_G^{\C}$ is defined \cite{groth53} as the infimum over all $K\in(0,\infty)$ such that, for any $m,n\in\N$ and for any $m\times n$ complex matrix \(a_{ij}\), we have
\begin{equation}\label{grotheq}
 \sup_{
 \substack{
 x_1,\ldots,x_m,y_1,\ldots,y_n\in\C^{m+n-1}\colon\\
 \|x_1\|=\cdots=\|x_m\|=\|y_1\|=\cdots=\|y_n\|=1
 }
 }
\left|\sum_{i=1}^{m}\sum_{j=1}^{n}a_{ij}\ip{x_i}{y_j}\right|
\leq K\cdot
\sup_{
 \substack{
 \epsilon_1,\ldots,\epsilon_m,\delta_1,\ldots,\delta_n\in\C\colon\\
 |\epsilon_1|=\cdots=|\epsilon_m|=|\delta_1|=\cdots=|\delta_n|=1
 }
 }\left|\sum_{i,j}a_{ij}\epsilon_i\overline{\delta_j}\right|.
\end{equation}
Here $\langle\cdot,\cdot\rangle$ denotes the standard complex inner product, which is linear in its first argument, and $\|\cdot\|$ denotes the standard Euclidean norm.  

Besides being a fundamental quantity in functional analysis \cite{PisierSurvey,var74,gupta18,briet12} and quantum information \cite{briet13}, the complex Grothendieck constant also has algorithmic applications.  Tropp~\cite{Tropp09} uses Grothendieck factorization (i.e. \eqref{grotheq} in an equivalent form) in a randomized polynomial-time algorithm that, given a matrix with unit-norm columns, selects a large column submatrix with bounded condition number.  For a complex matrix $A$ commuting with a transitive group of permutation matrices, \cite[Theorem~3.1]{BBLM20} gives
\[
 \|A\|_{L_\infty\to L_1}
 \leq\|A\|_{L_2\to L_2}
 \leq K_G^{\C}\|A\|_{L_\infty\to L_1},
\]
where the $L_p$ norms use normalized counting measure.  Thus the largest singular value approximates the $L_\infty\to L_1$ norm within a factor $K_G^{\C}$.

Haagerup proved \(K_G^{\C}\leq\gamma_H^{-1}\stackrel{\eqref{eq:gammaH}}{\approx}1.40490913\ldots\) in 1987 \cite{Haagerup}, which, until now, remained the smallest upper bound on $K_G^{\C}$.  Following improved upper bounds on $K_G^{\C}$ by \cite{Kaijser,Pisier78}, Haagerup used a complex analogue of Krivine's method for proving the bound $K_G^{\R}\leq\frac{\pi}{2\log(1+\sqrt{2})}$ \cite{Krivine} on Grothendieck's real constant (which has since been improved in a sequence of works \cite{BMMN,Heilman,LiEtAl,SahaEtAl}).  In particular, \cite{Haagerup} first preprocesses the vectors $x_i,y_j$, mapping them to a tensor product (Fock) space (see \eqref{one5}), where the map's coefficients are determined by the Taylor coefficients of the inverse of the function $h$ from \eqref{eq:analytic-h}, i.e. $h(t)$ is the correlation of the phase of two standard jointly complex Gaussians with correlation $t\in(-1,1)$.  Then, the resulting vectors are projected onto complex Gaussians and the phase function $z\mapsto z/|z|$ is applied (if $z\neq0$), producing $\epsilon_i,\delta_j$. The preprocessing is set up so that it cancels the nonlinearity that results from the Gaussian projection. This proof of \cite{Haagerup} is presented in Remark \ref{haark} and Lemma \ref{lem:linear} below. 

The lower bound $\KG\geq1.33807$ was shown in \cite{davie84} by computing the $\infty\to1$ norm of $P_1 - \lambda I$, where $P_1$ is the projection onto the first degree Hermite-Fourier coefficients, and $\lambda\in\R$ is chosen to optimize this norm.  The lower bound was independently improved \cite{guo26} to $\KG>1.35584631827168$ by subtracting odd Hermite projections from the identity map, as opposed to subtracting odd Hermite projections from $P_1 - \lambda I$, as was done in the real case \cite{Heilmanb,jones26}.
Our main result is the following.

\begin{theorem}\label{thm:main}
\begin{equation}
 \boxed{
 1.4<
 K_G^{\C}
 <1.404898554746
 <\gamma_H^{-1}-10^{-5}.
 }
 \label{eq:main-bound}
\end{equation}
\end{theorem}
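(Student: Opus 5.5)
The proof splits into two independent parts: an upper bound obtained from an improved rounding scheme, and a lower bound obtained from an explicit kernel, discretized and certified with rigorous numerics.

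\textbf{Upper bound.} I would adapt to the complex setting the ``Krivine-plus-something'' strategy that beat Krivine in the real case (as in the works improving $K_G^{\R}$ cited above), and combine it with ideas from the noncommutative complex case. Fix unit vectors $x_i,y_j$. As in Haagerup's proof (Remark~\ref{haark}, Lemma~\ref{lem:linear}), map them into Fock space using the Taylor coefficients of $h^{-1}$, scaled by a parameter $c$, to get unit vectors $X_i,Y_j$ with $\ip{X_i}{Y_j}=h^{-1}(c\ip{x_i}{y_j})$. Project onto a complex Gaussian and take phases. By Lemma~\ref{lem:linear} this yields $\E[\epsilon_i\overline{\delta_j}]=c\ip{x_i}{y_j}$.

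Haagerup's choice of $c$ is $\gamma_H$, the largest value for which the Fock vectors have norm at most $1$. The improvement comes from taking $c$ slightly larger than $\gamma_H$. The vectors are then too long, and the rounding must be corrected. I would use a two-dimensional (or low-dimensional) Gaussian projection with a rounding function that is not the plain phase. Concretely, I would perturb the phase map $z\mapsto z/|z|$ along a one-parameter family, as Braverman--Makarychev--Makarychev--Naor did in the real case. I would then compute the resulting correlation as a power series $\sum_{k} a_k t|t|^{2k}$ in the complex correlation $t$, and redo the inversion with this new odd series in place of $h$. The bound becomes $1/c^*$, where $c^*$ solves $\sum_k |b_k| (c^*)^{2k+1}=1$ and the $b_k$ are the coefficients of the inverse series.

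The main obstacle is choosing the perturbation so that the inverse series has coefficients whose absolute-value sum is provably smaller than Haagerup's, and then certifying the numerical value $1.404898554746$. Certification requires rigorous tail bounds on the Taylor coefficients of the inverse series. I would obtain these with Cauchy estimates on a disk where the series is analytic and injective, and evaluate the finitely many leading coefficients with interval arithmetic.

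\textbf{Lower bound.} Following Davie and \cite{guo26}, I would take the kernel $T=P_1-\lambda I-\sum_{k\text{ odd}\ge3}\mu_k P_k$ acting on functions on $\C^n$ with Gaussian measure, where the $P_k$ are Hermite projections. The semidefinite value, obtained by choosing $f(z)=z/|z|$ paired through the identity embedding, is computable in closed form via $\E|z|$-type integrals. The integer side requires bounding $\sup_{|f|=|g|=1}\Rea\E[\overline{g}\,Tf]$. By rotation invariance and the standard argument, this reduces to functions of the form $f(z)=\ph(z)\cdot\phi(|z|)$ with $\phi\in\{\pm1\}$-like phase-switching patterns, which turns it into a one-dimensional optimization over radial thresholds. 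I would optimize $\lambda,\mu_3$ numerically and verify the ratio exceeds $1.4$ with interval arithmetic. The delicate step here is the reduction of the $\infty\to1$ norm to finitely many radial thresholds.
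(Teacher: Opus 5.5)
\textbf{Upper bound.} The inversion framework you describe is sound. But the statement is a specific number, and your plan contains no scheme that attains it.

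The device of ``taking $c>\gamma_H$, letting the Fock vectors be too long, then correcting the rounding'' is not a mechanism. Once the phase is replaced by a charge-one rounding with kernel $\tilde h$, the Fock map must be rebuilt from $\tilde h^{-1}$. The bound is then simply $1/c^*$ with $\|\tilde h^{-1}(c^*\,\cdot)\|_{\A}=1$. Everything rests on exhibiting a perturbation with $c^*>1/1.404898554746$, and nothing guarantees that the BMMN mechanism supplies one.

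In the real case that mechanism exploits the alternating signs of $\sin(c\,\cdot)$, which make $\|\cdot\|_{\A}$ locally the linear functional $\phi\mapsto-i\phi(i)$. Haagerup's $\rho_*$ instead has sign pattern $+,-,0,-,-,\dots$ (Remark~\ref{haark}). So the Wiener norm is not even differentiable at $\rho_*$ in the $t^5$ direction. The paper notes that not even $K_G^{\C}<\gamma_H^{-1}$ follows from the most straightforward adaptation of BMMN.

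The paper inverts not the kernel of a perturbed rounding function but the composite valid kernel $\widehat Q/\mathcal M$. That kernel has two kinds of ingredient:
\begin{itemize}
\item five phase roundings, each preceded by its own preprocessor $\rho_r$ with $\|\rho_r\|_{\A}<1$; four of these are small perturbations of $\rho_*$, and none has a $t^5$ term;
\item a sixth branch using $F(z)=\ph(z)f(|z|^2)$ and $G(z)=\ph(z)g(|z|^2)$, averaged with $F^\#,G^\#$ so the coefficients are real, and tuned so that $f(0)\overline{g(0)}=i$, which kills the slow $h_n$ tail.
\end{itemize}
The resulting kernel is $q=at+r$ with $\|r\|_{\A}<1.25\cdot10^{-6}$. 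The crude bound of Lemma~\ref{lem:inverse-repair} then gives $\Gamma=(\widehat q_0-\|\widehat E\|_1)/\mathcal M$. No inverse series is ever certified; only 120 enclosed coefficients plus tail bounds are needed. This two-stage structure, with different preprocessors in different branches, is not of the form ``one Fock map followed by one rounding function''.

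\textbf{Lower bound.} The step you flag as delicate is false as stated. You need an upper bound on $\|T_n\|_{\infty\to1}$ uniformly in $n$. Rotation invariance of $T_n$ does not make optimizers depend on one coordinate or be radial. The valid reduction, iterating $f=\ph(T_ng)$ and $g=\ph(T_nf)$, only yields charge-one functions on $\C^n$. These still depend on all coordinates and carry general unimodular modulations, not $\pm1$ ones. Optimizing over your radial one-variable class therefore bounds the norm from below, which is the wrong direction.

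The dimension visibly matters once a degree-three projection is present. One has $\sup_{|u|\le1}\|P_{2,1}u\|_2=\sup\{\E|q|:q\in\Hcal_{2,1},\ \|q\|_2=1\}$, and $+\epsilon\|Qk\|_2^2$ enters \eqref{eq:split}. In one variable this supremum is $\E|z(|z|^2-2)|/\sqrt2\approx0.69$. For $q=N^{-1/2}\sum_{i\le N}z_{3i-2}z_{3i-1}\overline{z_{3i}}$, the central limit theorem gives $\E|q|\to\sqrt\pi/2\approx0.886$.

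The paper treats all $n$ at once, for $P_{1,0}-\frac{37}{50}P_{2,1}$ with no identity term. It uses four ingredients:
\begin{itemize}
\item the split $h=(f+g)/2$, $k=(f-g)/2$, with $Ph=tz$ and a fiber decomposition in $(z,w)$;
\item the dimension-free cubic moment lemma, giving $\E|q|\le49/50$ and $\E|q|^6\le9m-12+50(m-2)^{3/2}$ via $\kappa_6\le68\kappa_4^{3/2}$;
\item a convex, even scalar potential $\Psi$ bounded by quadratic certificates;
\item a piecewise-parabola envelope in $t$.
\end{itemize}
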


The proof of the upper bound of Theorem \ref{thm:main} synthesizes ideas for rounding schemes of the real Grothendieck inequality \cite{BMMN,Heilman,LiEtAl,SahaEtAl} and also from the noncommutative complex case \cite{NRV}.

In particular, we use a two-stage preprocessing with a mixed rounding scheme.  First, a common preprocessor is applied to the vectors $x_i,y_j$ in \eqref{grotheq}.  Then, if the outcome of rolling a particular nonuniform six-sided die is not $6$, the vectors $x_i,y_j$ are further preprocessed in one of five different ways, four of which are small perturbations of Haagerup's preprocessing.  Then, these vectors are projected onto complex Gaussians and the phase function is applied to output $\epsilon_i,\delta_j$.  If the die outcome is $6$, then no further preprocessing occurs, the vectors are projected onto complex Gaussians, and perturbed phase functions $F,G$ from \eqref{eq:origin-FG} are applied to them (or their conjugate reflections $z\mapsto(\overline{F(\overline{z})},\overline{G(\overline{z})})$, each with $1/2$ probability).

The first five die outcomes correspond to a ``mixed rounding scheme,'' as used in \cite{BMMN,NaorRegev,LiEtAl,SahaEtAl}.  The sixth outcome is a relative of the rounding scheme from \cite{NRV}.  The novelty in this construction is both the two-stage process of the rounding scheme, and the combination of different preprocessing functions.  For comparison, the mixed rounding schemes from \cite{BMMN,NaorRegev,LiEtAl,SahaEtAl} use a single pair of preprocessing maps.

If instead of the upper bound of Theorem \ref{thm:main}, the reader is satisfied by the weaker statement $K_G^{\C}<\gamma_H^{-1}$, then a simpler proof than that used for Theorem \ref{thm:main} suffices.  That simpler rounding scheme is analogous to the one described above, where the first five alternatives collapse to one case, so there are only two alternatives instead of six.  Since the statement $K_G^{\C}<\gamma_H^{-1}$ is weaker than Theorem \ref{thm:main}, we omit the details.  Due to the different preprocessing steps in this two-stage mixed rounding scheme, the statement $K_G^{\C}<\gamma_H^{-1}$ does not follow from the most straightforward adaptation of the argument of \cite{BMMN}.

The lower bound of Theorem \ref{thm:main} follows by upper bounding the $\infty\to1$ norm of
\[
 T_n\colonequals P_{1,0}-\frac{37}{50}P_{2,1},
\]
where $P_{p,q}$ is the orthogonal projection onto the complex Hermite component of bidegree $(p,q)$ under standard Gaussian measure on $\C^n$.  We prove in Theorem \ref{thm:main2} that
\[
 \sup_{n\geq1}\|T_n\|_{\infty\to1}
 <\frac{2857}{4000}<\frac57,
\]
which implies $K_G^{\C}>4000/2857>1.4$. 

Surprisingly, we were unable to successfully apply the ``limiting Krivine schemes'' directly from \cite{SahaEtAl} in Theorem \ref{thm:main}, although those schemes achieve the best bounds on the real constant $K_G^{\R}$.  There it is shown that certain nonlinear maps can be applied to the correlation parameter.  Perhaps these ideas could be incorporated somehow to improve \eqref{eq:main-bound}. 

Both the upper and lower bounds in Theorem \ref{thm:main} perform analytical reductions to inequalities which are certified numerically.
Supporting codes for Theorem \ref{thm:main} are given at:

\url{https://github.com/sheilman77/grothendieck_cx}

\subsection{A Remaining Conjecture}
Haagerup \cite{Haagerup} conjectured that a full complex analogue of Krivine's
argument should imply that $K_G^\C$ is equal to
\begin{equation}
 \gamma_\dagger^{-1}
 =1.404575934663742\ldots,
 \qquad
 \gamma_\dagger
 \colonequals
 \int_0^{\pi/2}\frac{\cos^2t}{\sqrt{1+\sin^2t}}\,dt,
 \label{eq:haagerup-conjecture}
\end{equation}
(see \cite[Section~4]{PisierSurvey}), but this remains a conjecture.

\section{Preliminaries on Haagerup's Approach}

A standard complex Gaussian has density
\(w\mapsto\pi^{-1}e^{-|w|^2}\), $w\in\C$, so its second absolute moment is one.
Let \((X,Y)\) be a jointly proper standard complex Gaussian pair with
\(\E[X\overline Y]=z\), \(|z|\leq1\).  Haagerup's phase identity \cite[Lemma~3.2]{Haagerup} (see also \cite[Lemma~2.1]{FLZ}) is
\begin{equation}
 \E\bigl[\ph(X)\overline{\ph(Y)}\bigr]
 =\mathfrak h(z)\colonequals\frac{\pi}{4}z\cdot\,
 {}_2F_1\!\left(\frac12,\frac12;2;|z|^2\right)
 =\sum_{n\geq0}h_nz|z|^{2n},
 \label{eq:haagerup-phase}
\end{equation}
where \(\ph(w)=w/|w|\) for all $w\in\C\setminus\{0\}$ with $\ph(0)\colonequals1$, and
\begin{equation}
 h_k=\frac{\pi}{4(k+1)}\frac{\binom{2k}{k}^2}{16^k},
 \qquad
 \frac{h_{k+1}}{h_k}
 =\frac{(2k+1)^2}{4(k+1)(k+2)},
 \qquad \sum_{k\geq0}h_k=1.
 \label{eq:h-coefficients}
\end{equation}

Write \(\D\colonequals\{z\in\C\colon|z|<1\}\).  For any real $t\in(-1,1)$, let
\begin{equation}
 h(t)\colonequals\frac{\pi}{4}t\cdot\,
 {}_2F_1\!\left(\frac12,\frac12;2;t^2\right)
 =\sum_{k\geq0}h_kt^{2k+1}.
 \label{eq:analytic-h}
\end{equation}
Thus \(h\) is an analytic odd series, whereas \(\mathfrak h\) is the
physical charge-one radial kernel.  Here ``charge one'' means
\(K(e^{i\theta}z)=e^{i\theta}K(z)\) for all $\theta\in\R$ and $z\in\overline\D$.  The integral form in~\cite[Lemma~3.2]{Haagerup}
also yields
\begin{equation}\label{hint}
 \mathfrak h(z)=z\int_0^{\pi/2}
 \frac{\cos^2\theta}{\sqrt{1-|z|^2\sin^2\theta}}\,d\theta,\qquad\forall\,z\in\overline\D.
\end{equation}

Let \(x_H\in(0,1)\) be the unique solution of
\begin{equation}
 2x_H\cdot\,{}_2F_1\!\left(\frac12,\frac12;2;x_H^2\right)=1+x_H,
 \label{eq:contact}
\end{equation}
and put
\begin{equation}
 \gamma_H\colonequals\frac{\pi(1+x_H)}8
 \approx0.711789806684998.
 \label{eq:gammaH}
\end{equation}
(Recall $h(0)=0$ by \eqref{eq:analytic-h} and $h(1)=1$ from \eqref{eq:h-coefficients}, so the function $\phi(x)\colonequals h(x)-(\pi/8)(1+x)$ is negative at $x=0$ and positive at $x=1$, so there exists at least one $x$ satisfying \eqref{eq:contact}.  To see uniqueness, recall that $h_0=\pi/4$ and $h_k>0$ for all $k\geq1$, so $\phi'(x)>0$ for all $0<x<1$.)

Let \(h^{-1}\) be the odd analytic inverse germ of
\eqref{eq:analytic-h}, and let
\begin{equation}\label{rhostdef}
\rho_*(t)\colonequals h^{-1}(\gamma_Ht).
\end{equation}

\section{General Results on Kernels}

We first isolate general facts in Haagerup's rounding procedure in terms of kernels.  Let \(S(E)\) denote the unit sphere
of a complex Hilbert space \(E\).

\begin{definition}\label{def:allowable}
A map \(\rho\colon\overline\D\to\overline\D\) is an \textbf{allowable
preprocessor} if, for every complex Hilbert space \(E\), there
are a complex Hilbert space \(F\) and maps
\(U,V\colon S(E)\to S(F)\) where
\[
 \ip{U(x)}{V(y)}=\rho\bigl(\ip{x}{y}\bigr)
 \qquad\forall\,x,y\in S(E).
\]
A function \(K:\overline\D\to\overline\D\) is a \textbf{valid rounding
kernel} if, $\forall$ $E$, $\forall$ $m,n\in\N$ and $\forall$ $x_1,\ldots,x_m,$  $y_1,\ldots,y_n\in S(E)$, there are random variables $\epsilon_1,\ldots,\epsilon_m,\delta_1,\ldots,\delta_n\in S(\C)$ such that
\begin{equation}\label{valideq}
 \E[\varepsilon_i\overline{\delta_j}]
 =K\bigl(\ip{x_i}{y_j}\bigr),\qquad\forall\,1\leq i\leq m,\, 1\leq j\leq n.
\end{equation}
\end{definition}

\begin{lemma}[Kernel Bound]
\label{lem:linear}
Let $r>0$.  Assume $K(z)=rz$ is valid.  Then $K_G^\C\leq r^{-1}$.
\end{lemma}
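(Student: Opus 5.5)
Fix $m,n\in\N$, a complex matrix $(a_{ij})$, and unit vectors $x_1,\ldots,x_m,y_1,\ldots,y_n\in\C^{m+n-1}$ nearly attaining the left-hand supremum in \eqref{grotheq}. The plan is to round these vectors to random unimodular scalars using the validity of $K(z)=rz$, and then compare expectations. Take $E=\C^{m+n-1}$ in Definition~\ref{def:allowable}; since $K(z)=rz$ is a valid rounding kernel, there are random variables $\epsilon_i,\delta_j\in S(\C)$ with $\E[\epsilon_i\overline{\delta_j}]=r\ip{x_i}{y_j}$ for all $i,j$.

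Next I will use linearity of expectation:
\[
 r\sum_{i,j}a_{ij}\ip{x_i}{y_j}=\E\Bigl[\sum_{i,j}a_{ij}\epsilon_i\overline{\delta_j}\Bigr].
\]
Taking absolute values and applying $|\E Z|\leq\E|Z|$ gives
\[
 r\Bigl|\sum_{i,j}a_{ij}\ip{x_i}{y_j}\Bigr|\leq\E\Bigl|\sum_{i,j}a_{ij}\epsilon_i\overline{\delta_j}\Bigr|.
\]
For every outcome, the random tuple $(\epsilon_1,\ldots,\delta_n)$ is a feasible point of the right-hand supremum in \eqref{grotheq}. So the integrand is pointwise at most that supremum, call it $M$, and hence so is its expectation. (The supremum is attained by compactness, although this is not needed.) This yields
\[
 \Bigl|\sum_{i,j}a_{ij}\ip{x_i}{y_j}\Bigr|\leq r^{-1}M.
\]

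Taking the supremum over the unit vectors shows that \eqref{grotheq} holds with $K=r^{-1}$ for every $m,n$ and every matrix $(a_{ij})$. Since $K_G^\C$ is the infimum of all such $K$, it follows that $K_G^\C\leq r^{-1}$.

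There is no real obstacle. The only points to check are that the supremum in \eqref{grotheq} ranges over vectors in $\C^{m+n-1}$, which is a complex Hilbert space and so covered by the definition of validity, and that measurability of the random variables is implicit in the definition. If $r>1$ were allowed the conclusion would contradict $K_G^\C\geq1$ (take $m=n=1$), but this does not affect the argument: the argument itself shows that validity forces $r\leq1$.
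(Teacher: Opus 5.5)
Your proposal is correct and follows essentially the same route as the paper: you realize the kernel $rz$ by unimodular random variables on the given vectors, use linearity of expectation, and bound $\bigl|\E\sum_{i,j}a_{ij}\epsilon_i\overline{\delta_j}\bigr|$ pointwise by the right-hand supremum in \eqref{grotheq}. Working with nearly optimal vectors instead of exact maximizers is a harmless variation.
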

\begin{proof}
Choose unit vectors $x_1,\ldots,x_m,y_1,\ldots,y_n$ attaining the supremum on the left side of \eqref{grotheq}.
Since \(K(z)=rz\) is a valid kernel,
\begin{flalign*}
\left|\sum_{i=1}^{m}\sum_{j=1}^{n}a_{ij}\langle x_i, y_j\rangle\right|
&=\frac{1}{r}\left|\sum_{i=1}^{m}\sum_{j=1}^{n}a_{ij}K(\langle x_i, y_j\rangle)\right|\\
 &\stackrel{\eqref{valideq}}{=}\frac{1}{r}\left|\E\sum_{i=1}^{m}\sum_{j=1}^{n}a_{ij}
\varepsilon_i\overline{\delta_j}\right|
\leq\frac{1}{r}\sup_{\epsilon_1,\ldots,\epsilon_m,\delta_1,\ldots,\delta_n\in S(\C)}\left|\sum_{i=1}^{m}\sum_{j=1}^{n}a_{ij}
       \varepsilon_i\overline{\delta_j}\right|.
\end{flalign*}

\end{proof}

We use the real odd Wiener space
\[
 \A=\left\{u(t)=\sum_{k\geq0}c_kt^{2k+1}:
      c_k\in\mathbb R,\ \|u\|_{\A}\colonequals\sum_k|c_k|<\infty\right\}.
\]
The radial extension of $u\in\A$ is
\begin{equation}\label{physdef}
 u^{\mathrm{phys}}(z)\colonequals\sum_{k\geq0}c_kz|z|^{2k}.
\end{equation}
Coefficient convolution and the triangle inequality give
\begin{equation}
 \|u^{2j+1}\|_{\A}
 \leq\left(\sum_{k\geq0}|c_k|\right)^{2j+1}
 =\|u\|_{\A}^{2j+1},
\label{eq:wiener-submultiplicative}
\end{equation}
for all $j\geq0$.
In particular, if \(\|u\|_{\A}\leq1\), then
\begin{equation}
 \sum_{k\geq0}h_k\|u^{2k+1}\|_{\A}
 \leq\sum_{k\geq0}h_k\|u\|_{\A}^{2k+1}
 \leq\sum_{k\geq0}h_k=1.
 \label{eq:absolute-composition}
\end{equation}
Thus \(h\circ u\) converges absolutely in \(\A\).  Expansion shows
that its radial extension is
\(\mathfrak h\circ u^{\mathrm{phys}}\).

\begin{remark}\label{haark}
Recall $\gamma_H\approx0.71178980668\ldots$ from \eqref{eq:gammaH}.  Let $\rho_*(t)\colonequals h^{-1}(\gamma_H t)$ for all $t\in[0,1]$.  Then \cite{Haagerup} shows that $\|\rho_*\|_\A=1$, so $K_{\rho_*}(z)=\gamma_H\cdot z$ is valid by Lemma \ref{lem:wiener} below, so Lemma \ref{lem:linear} implies that $K_G^\C\leq \gamma_H^{-1}$, recovering the main result of \cite{Haagerup}.  To justify $\|\rho_*\|_\A=1$, write
$
\rho_*(t)=\sum_{n\geq0}c_nt^{2n+1}.
$
The known expansion of $h^{-1}$ shows \(c_0=(1+x_H)/2\), $c_1<0$, \(c_2=0\), and $c_n<0$ for every $n>2$.  Since
\(\rho_*(1)=h^{-1}(\gamma_H)=x_H\) by \eqref{eq:contact},
\begin{equation}\label{two5}
       \|\rho_*\|_{\A}
       =c_0-\sum_{n\geq1}c_n
       =2c_0-\sum_{n\geq0}c_n
       =2c_0 -x_H
       \stackrel{\eqref{eq:contact}}{=}1,
 \qquad h(\rho_*(t))=\gamma_H t.
\end{equation}
\end{remark}
\begin{remark}
The sign pattern of the coefficients of $h^{-1}$ here is different from the analogous sign pattern for real scalars.  In the latter setting, the inverse of the Gaussian correlation of two sign functions is a sine function, whose coefficients have alternating signs.  But $c_n<0$ for all $n>2$, so bounds for $K_G^{\R}$ do not carry over directly to the complex case.
\end{remark}

\begin{lemma}[Existence and Convexity of Valid Kernels]
\label{lem:wiener}
If \(\rho(t)=\sum_{k\geq0}c_kt^{2k+1}\in\A\) and
\(\|\rho\|_{\A}\leq1\), then \(\rho^{\rm phys}\) is an allowable preprocessor.
For any $z\in\overline\D$ define
\begin{equation}\label{krhodef}
       K_\rho(z)\colonequals\mathfrak h(\rho^{\mathrm{phys}}(z)).
\end{equation}
Then $K_\rho$ is a valid rounding correlation kernel, with coefficient representative \(h\circ\rho\in\A\).

More generally, let \(F,G\colon \C\to S(\C)\) be measurable.  For a jointly
proper standard circular complex Gaussian pair \((X_z,Y_z)\) satisfying
\(\E[X_z\overline{Y_z}]=z\) for all $z\in\overline\D$, the function
\begin{equation}\label{kfgdef}
       K_{F,G}(z)\colonequals\E\big[F(X_z)\overline{G(Y_z)}\big],\qquad\forall\,z\in\overline\D,
\end{equation}
is a valid complex correlation rounding kernel.  

Convex combinations of valid rounding kernels are valid rounding kernels.
\end{lemma}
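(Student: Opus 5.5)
Three claims need proof: the Fock-space (tensor) construction for the preprocessor, Gaussian projection for the general kernel $K_{F,G}$, and independent mixing for convex combinations. I will prove them in the order (ii), (i), (iii), since the statement about $K_\rho$ reduces to (ii) once (i) is in place.

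\emph{Step 1: $K_{F,G}$ is valid.} Fix $E$ and unit vectors $x_1,\dots,x_m,y_1,\dots,y_n\in S(E)$. Replace $E$ by the finite-dimensional span of these vectors and identify it with $\C^d$. Let $g$ be a standard complex Gaussian vector in $\C^d$ with $\E[g g^*]=I$, and set $X_i=\ip{g}{x_i}$ and $Y_j=\ip{g}{y_j}$ (inner product linear in the first slot; if needed, conjugate so that $\E[X_i\overline{Y_j}]=\ip{x_i}{y_j}$). Each pair $(X_i,Y_j)$ is then a jointly proper circular standard complex Gaussian pair with correlation $z=\ip{x_i}{y_j}$. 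Define $\varepsilon_i=F(X_i)$ and $\delta_j=G(Y_j)$; these lie in $S(\C)$. Since $\E[F(X)\overline{G(Y)}]$ depends only on the joint law of $(X,Y)$, we get $\E[\varepsilon_i\overline{\delta_j}]=K_{F,G}(\ip{x_i}{y_j})$. The only care needed is the conjugation convention, which ensures the covariance is $z$ and not $\bar z$.

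\emph{Step 2: $\rho^{\rm phys}$ is allowable.} Write $\rho(t)=\sum_k c_k t^{2k+1}$ and work in $F=\bigoplus_k E^{\otimes(k+1)}\otimes\overline{E}^{\otimes k}\oplus\C$, where $\overline E$ is the conjugate space. Define
\[
U(x)=\Bigl(\sqrt{|c_k|}\,x^{\otimes(k+1)}\otimes\bar x^{\otimes k}\Bigr)_k\oplus\sqrt{1-\|\rho\|_\A},
\]
and define $V(y)$ in the same way, but with $\sgn(c_k)\sqrt{|c_k|}$ in each component and the same constant in the last coordinate. Then
\[
\ip{x^{\otimes(k+1)}\otimes\bar x^{\otimes k}}{y^{\otimes(k+1)}\otimes\bar y^{\otimes k}}=\ip{x}{y}^{k+1}\,\overline{\ip{x}{y}}^{\,k}=z|z|^{2k}.
\]
It follows that $\|U(x)\|^2=\sum_k|c_k|+1-\|\rho\|_\A=1$, and likewise $\|V(y)\|=1$. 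However, $\ip{U(x)}{V(y)}=\rho^{\rm phys}(z)+(1-\|\rho\|_\A)$, so the constant padding breaks the identity. To fix this, replace the constant coordinate by a pair of orthogonal unit vectors $e,e'$ in an auxiliary summand, putting $\sqrt{1-\|\rho\|_\A}\,e$ into $U$ and $\sqrt{1-\|\rho\|_\A}\,e'$ into $V$. The cross term then vanishes, and $\ip{U(x)}{V(y)}=\rho^{\rm phys}(\ip{x}{y})$ exactly. Getting this padding right is the main subtlety of the construction.

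\emph{Step 3: $K_\rho$ is valid, and convexity.} Apply Step 1 to the vectors $U(x_i)$ and $V(y_j)$ in $F$ with $F=G=\ph$. By \eqref{eq:haagerup-phase}, $\E[\varepsilon_i\overline{\delta_j}]=\mathfrak h(\rho^{\rm phys}(\ip{x_i}{y_j}))=K_\rho(\ip{x_i}{y_j})$. By \eqref{eq:absolute-composition} and the remark that follows it, $h\circ\rho\in\A$ has radial extension $\mathfrak h\circ\rho^{\rm phys}$. For convexity, let $K=\sum_\ell p_\ell K_\ell$ and fix the vectors. For each $\ell$, take random variables realizing $K_\ell$. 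Draw an independent index $L$ with $\mathbb P(L=\ell)=p_\ell$ and output the $L$-th family. Conditioning on $L$ gives $\E[\varepsilon_i\overline{\delta_j}]=\sum_\ell p_\ell K_\ell(\ip{x_i}{y_j})$.
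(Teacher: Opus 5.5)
Your proposal is correct and matches the paper's proof. It uses the same tensor feature maps $x^{\otimes(k+1)}\otimes\overline{x}^{\otimes k}$ with orthogonal padding vectors (the paper's $e_L,e_R$), a single shared Gaussian projection on the finite span, and an independent random selector for convex combinations. The differences are cosmetic: you obtain $K_\rho$ as the case $F=G=\ph$ of the $K_{F,G}$ statement applied to $U(x_i),V(y_j)$, and the conjugation you hedge on should simply be fixed as $X_i=\ip{x_i}{g}$, $Y_j=\ip{y_j}{g}$, which gives $\E[X_i\overline{Y_j}]=\ip{x_i}{y_j}$ under the paper's convention.
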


\begin{proof}
Put \(s_\rho\colonequals\sum_{k\geq0}|c_k|\).  Note that $s_\rho\leq1$ by the assumption $\|\rho\|_{\A}\leq1$.  After placing the tensor summands in mutually
orthogonal Hilbert spaces, fix unit vectors \(e_L,e_R\in H\) that are orthogonal
to every tensor summand and to each other, and set
\begin{equation}\label{one5}
\begin{aligned}
 U(x)&\colonequals
 \bigoplus_{k\geq0}|c_k|^{1/2}(x^{\otimes(k+1)}\otimes \overline x^{\otimes k})
              \oplus\Big((1-s_\rho)^{1/2}\,e_L\Big),
 \\
 V(y)&\colonequals\bigoplus_{k\geq0}\sgn(c_k)|c_k|^{1/2}(y^{\otimes(k+1)}\otimes \overline y^{\otimes k})
              \oplus\Big((1-s_\rho)^{1/2}\,e_R\Big).
\end{aligned}
\end{equation}
Then $\|U(x)\|=\|V(y)\|=1$ by definition of $s_\rho$, and
\begin{equation}\label{one6}
       \ip{U(x)}{V(y)}
       \stackrel{\eqref{one5}}{=}\sum_{k\geq0}c_k\ip{x}{y}^{k+1}
                         \overline{\ip{x}{y}}^{\,k}
       \stackrel{\eqref{physdef}}{=}\rho^{\mathrm{phys}}(\ip{x}{y}).
\end{equation}
Thus the maps $U,V$ are feature maps witnessing that \(\rho^{\rm phys}\) is an
allowable preprocessor.

Let $x_1,\ldots,x_m,y_1,\ldots,y_n\in S(H)$.  Let $E\colonequals\mathrm{span}_{\C}\{U(x_1),\ldots,U(x_m),V(y_1),\ldots,V(y_n)\}$.  Then $E$ is a linear subspace with dimension at most $m+n$.  Let $g$ be a complex standard Gaussian in $E$.  For every $v,w\in E$, our convention that the inner product is linear
in its first argument gives
\begin{equation}\label{evf}
 \mathbb E\!\left[
   \langle v,g\rangle\,
   \overline{\langle w,g\rangle}
 \right]
 =\langle v,w\rangle,
 \qquad
 \mathbb E\!\left[
   \langle v,g\rangle\langle w,g\rangle
 \right]=0.
\end{equation}
Consequently,
\[
 Z_i\colonequals\langle U(x_i),g\rangle,
 \qquad
 W_j\colonequals\langle V(y_j),g\rangle,\qquad\forall\,1\leq i\leq m,\, 1\leq j\leq n,
\]
form a jointly proper family of standard circular complex Gaussians,
and \eqref{one6} yields
\begin{equation}\label{ppeq}
 \mathbb E[Z_i\overline{W_j}]
 \stackrel{\eqref{evf}}{=}\langle U(x_i),V(y_j)\rangle
 \stackrel{\eqref{one6}}{=}\rho^{\mathrm{phys}}(\langle x_i,y_j\rangle),\qquad\forall\,1\leq i\leq m,\,1\leq j\leq n.
\end{equation}
Thus, on setting
$\epsilon_i\colonequals\operatorname{phase}(Z_i)$ and
$\delta_j\colonequals\operatorname{phase}(W_j)$, identity \eqref{eq:haagerup-phase} gives
\[
 \mathbb E[\epsilon_i\overline{\delta_j}]
 \stackrel{\eqref{eq:haagerup-phase}\wedge\eqref{ppeq}}{=}\mathfrak{h}\!\left(
   \rho^{\mathrm{phys}}(\langle x_i,y_j\rangle)
 \right)
 \stackrel{\eqref{krhodef}}{=}K_\rho(\langle x_i,y_j\rangle).
\]
This proves the required validity for every finite instance.

It remains to verify the assertion for 
$K_{F,G}$.  Let
$
 x_1,\ldots,x_m,y_1,\ldots,y_n\in S(H),
$
and let
$
 E'\colonequals\operatorname{span}_{\mathbb C}
 \{x_1,\ldots,x_m,y_1,\ldots,y_n\}.
$
Then $\dim E'\leq m+n$.  Choose a standard proper circular complex
Gaussian vector $g'$ on $E'$, and define the shared Gaussian projections
\[
 X_i\colonequals\langle x_i,g'\rangle,
 \qquad
 Y_j\colonequals\langle y_j,g'\rangle,\qquad\forall\,1\leq i\leq m,\, 1\leq j\leq n.
\]
The entire family $(X_1,\ldots,X_m,Y_1,\ldots,Y_n)$ is jointly proper
complex Gaussian.  Moreover,
\[
 \mathbb E|X_i|^2=\mathbb E|Y_j|^2=1,
 \qquad
 \mathbb E[X_i\overline{Y_j}]
 =\langle x_i,y_j\rangle,\qquad\forall\,1\leq i\leq m,\, 1\leq j\leq n.
\]
Thus, for each pair $(i,j)$, the joint distribution of $(X_i,Y_j)$ is
the distribution of a jointly proper standard circular complex
Gaussian pair $(X_z,Y_z)$ with
$
 z=\langle x_i,y_j\rangle.
$
Here we use that a centered proper complex Gaussian pair is
determined by its covariance matrix.  Now set
\begin{equation}\label{epsdef}
 \epsilon_i\colonequals F(X_i),
 \qquad
 \delta_j\colonequals G(Y_j).
\end{equation}
Since $F$ and $G$ take values in $S(\C)$, we have $|\epsilon_i|=|\delta_j|=1$ for all $1\leq i\leq m$, $1\leq j\leq n$, and
\[
 \mathbb E[\epsilon_i\overline{\delta_j}]
 \stackrel{\eqref{epsdef}}{=}\mathbb E\!\left[F(X_i)\overline{G(Y_j)}\right]
 \stackrel{\eqref{kfgdef}}{=}K_{F,G}(\langle x_i,y_j\rangle).
\]
The use of one shared Gaussian vector $g'$ is important: it places all
the variables $\epsilon_1,\ldots,\epsilon_m,$ $\delta_1,\ldots,\delta_n$
on a single probability space.  The definition of a valid kernel
requires such a joint realization, rather than a separate Gaussian
pair for each $(i,j)$.  Hence $K_{F,G}$ is a valid complex rounding
correlation kernel.

Finally, validity is preserved under convex combinations.  Indeed,
let $K^{(1)},\ldots,K^{(r)}$ be valid kernels and let
$\lambda_1,\ldots,\lambda_r\geq0$ satisfy
$\sum_{a=1}^r\lambda_a=1$.  For the fixed input $x_1,\ldots,x_m,y_1,\ldots,y_n\in S(H)$, choose for
each $1\leq a\leq r$ a joint unimodular family
\[
 \epsilon_1^{(a)},\ldots,\epsilon_m^{(a)},
 \delta_1^{(a)},\ldots,\delta_n^{(a)}\in S(\C),
\]
witnessing the validity of $K^{(a)}$.  Let $A\in\{1,\ldots,r\}$ be an independent
random selector with
$
 \mathbb P(A=a)=\lambda_a,
$
for all $1\leq a\leq r$.  Define
\[
 \epsilon_i\colonequals\epsilon_i^{(A)},
 \qquad
 \delta_j\colonequals\delta_j^{(A)},
 \qquad\forall\,1\leq i\leq m,\,1\leq j\leq n.
\]
Conditioning on $A$ gives
\[
 \mathbb E[\epsilon_i\overline{\delta_j}]
 =\mathbb E(\mathbb E[\epsilon_i\overline{\delta_j}|A])
=\sum_{a=1}^r\lambda_a\,
   \mathbb E\!\left[
\epsilon_i^{(a)}\overline{\delta_j^{(a)}}
   \right]  \\
 =\sum_{a=1}^r\lambda_a
   K^{(a)}(\langle x_i,y_j\rangle),\quad\forall\,1\leq i\leq m,\,1\leq j\leq n.
\]
Therefore $\sum_{a=1}^r\lambda_aK^{(a)}$ is a valid rounding kernel. 

\end{proof}

\begin{lemma}[Repairing a nonlinear kernel]
\label{lem:inverse-repair}
Let $r\in\A$ with no degree one term.  Let
\[
 q(t)\colonequals at+r(t)\in\A,
 \qquad a>0,
 \qquad \|r\|_{\A}\leq\delta<a.
\]
For all \(0\leq\Gamma\leq a-\delta\), the inverse germ
\(\eta(t)=q^{-1}(\Gamma t)\) belongs to \(\A\) and
\(\|\eta\|_{\A}\leq1\).  Moreover, if \(q\) represents a valid
charge-one kernel, then inverse preprocessing by
\(\eta^{\mathrm{phys}}\) produces the linear valid kernel
\(z\mapsto\Gamma z\).
\end{lemma}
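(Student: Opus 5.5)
We construct $\eta$ as a fixed point and bound its Wiener norm with a majorant argument, since the inverse of an odd series is most easily controlled through its absolute-value majorant. The equation $q(\eta(t))=\Gamma t$ can be rewritten as
\[
 \eta=\frac{\Gamma}{a}\,t-\frac1a\,r\circ\eta .
\]
This suggests two things. First, we define $\eta$ coefficientwise by the recursion obtained by matching powers $t^{2k+1}$. Because $r$ has no degree-one term, the coefficient of $t^{2k+1}$ in $r\circ\eta$ involves only coefficients of $\eta$ of index $<k$, so $\eta$ is well defined as a formal odd power series and is the inverse germ $q^{-1}(\Gamma t)$. Second, we compare $\eta$ with a real-variable majorant.

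Write $r(t)=\sum_{k\geq1}r_kt^{2k+1}$, let $\bar r(s)\colonequals\sum_{k\ge1}|r_k|s^{2k+1}$, and let $\bar\eta(t)=\sum_k|\eta_k|t^{2k+1}$. By induction on the recursion, the coefficients of $\bar\eta$ are dominated by those of the solution $\sigma$ of
\[
 a\sigma=\Gamma t+\bar r(\sigma).
\]
The triangle inequality on each coefficient of $r\circ\eta$ is exactly the estimate used in \eqref{eq:wiener-submultiplicative}, but applied coefficientwise rather than to the total sum. The solution $\sigma$ has nonnegative coefficients, so $\|\eta\|_\A\le\sigma(1)$, provided $\sigma$ converges at $t=1$. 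It therefore suffices to show that the increasing branch $\sigma(t)$ starting at $\sigma(0)=0$ extends continuously to $[0,1]$ with $\sigma(1)\le1$.

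Consider $\psi(s)\colonequals as-\bar r(s)$ on $[0,1]$. Then $\psi(0)=0$ and $\psi(1)=a-\|r\|_\A\ge a-\delta\ge\Gamma$. Moreover $\bar r(s)\le \delta s^3$, so $\bar r'(s)\le 3\delta s^2$, and $\psi$ is strictly increasing on $[0,1]$ as long as $3\delta s^2<a$. This monotonicity is the main obstacle: we need $\psi$ to be increasing up to the point where it reaches $\Gamma t$, which is what guarantees that the analytic inverse branch stays in $[0,1]$ and converges. Our first attempt avoids monotonicity altogether by using the standard iteration argument. Set $\sigma^{(0)}=0$ and $\sigma^{(n+1)}=(\Gamma t+\bar r(\sigma^{(n)}))/a$. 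By induction these have nonnegative coefficients, increase coefficientwise, and satisfy $\sigma^{(n)}(1)\le1$, since
\[
 \sigma^{(n)}(1)\le1 \;\Rightarrow\; \sigma^{(n+1)}(1)\le(\Gamma+\delta)/a\le1 .
\]
By monotone convergence, the coefficientwise limit $\sigma$ has $\sigma(1)\le1$ and solves the fixed-point equation. By uniqueness of the formal solution (the same triangular recursion), $\sigma$ is the majorant series, and hence $\|\eta\|_\A\le1$.

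For the final assertion, Lemma \ref{lem:wiener} makes $\eta^{\rm phys}$ an allowable preprocessor. Suppose $q$ represents a valid charge-one kernel $Q=q^{\rm phys}$, meaning that for every configuration of unit vectors the rounding yields correlations $Q(\langle x_i,y_j\rangle)$. We apply that rounding to the preprocessed vectors $U(x_i),V(y_j)$, whose inner products are $\eta^{\rm phys}(\langle x_i,y_j\rangle)$. The resulting kernel is $q^{\rm phys}\circ\eta^{\rm phys}$. Since $\|\eta\|_\A\le1$ and $q\in\A$, the composition $q\circ\eta$ converges absolutely in $\A$, exactly as in \eqref{eq:absolute-composition}. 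Expanding the radial extension then shows that $q^{\rm phys}\circ\eta^{\rm phys}=(q\circ\eta)^{\rm phys}$. Finally, $q\circ\eta=\Gamma t$ as formal series, and both sides are absolutely convergent on $[-1,1]$, so the kernel is $z\mapsto\Gamma z$.
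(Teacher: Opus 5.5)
Your proposal is correct and follows the paper's proof essentially step for step. The shared steps are: the Picard iteration $\sigma^{(n+1)}=(\Gamma t+\bar r(\sigma^{(n)}))/a$ with $\sigma^{(n+1)}(1)\le(\Gamma+\delta)/a\le1$; monotone convergence to the majorant; degree-by-degree domination $|\eta_k|\le\sigma_k$ of the formal inverse; and running the valid rounding for $q$ on the feature vectors realizing $\eta^{\mathrm{phys}}$ via $q^{\mathrm{phys}}\circ\eta^{\mathrm{phys}}=(q\circ\eta)^{\mathrm{phys}}$.

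The only flaw is in the monotonicity detour that you abandon: $\bar r(s)\le\delta s^3$ does not imply $\bar r'(s)\le 3\delta s^2$ (take $\bar r(s)=\delta s^5$). Nothing in your final argument relies on that step.
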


\begin{proof}
Write \(r(t)=\sum_{m\geq1}r_mt^{2m+1}\) and
\(r^{\#}(t)\colonequals\sum_{m\geq1}|r_m|t^{2m+1}\).  Starting with \(v_0=0\),
define formal power series $v_{k+1}$ with nonnegative coefficients by
\[
 v_{k+1}
 \colonequals\frac{\Gamma t+r^{\#}(v_k)}a.
\]
Since $r^\#$ has nonnegative coefficients, $v_k\leq v_{k+1}$ coefficientwise.  If \(\|v_k\|_{\A}\leq1\),
then submultiplicativity of the $\|\cdot\|_\A$ norm gives
\[
 \|v_{k+1}\|_{\A}
 \leq\frac{\Gamma+\delta}{a}\leq1,\qquad\forall\,k\geq0.
\]
Coefficientwise monotone convergence therefore gives a $k\to\infty$ limit
\(v\in\A\) with \(\|v\|_{\A}\leq1\): monotone convergence gives
\(\sum_{k\geq1}[t^k]v(t)\leq1\), and coefficientwise passage in the recursion gives
\(v=(\Gamma t+r^{\#}(v))/a\).  The formal equation
\begin{equation}\label{formaleq}
 \eta=\frac{\Gamma t-r(\eta)}a
\end{equation}
has a unique formal power series solution with zero constant term, since \(r\) has order at least three.  Induction
on degree gives
\(\lvert[t^k]\eta\rvert\leq[t^k]v\) for every \(k\), so
\(\eta\in\A\) and \(\|\eta\|_{\A}\leq1\).  Absolute convergence upgrades
the formal identity \eqref{formaleq} to \(q(\eta(t))=\Gamma t\) in \(\A\).

By Lemma~\ref{lem:wiener}, \(\eta^{\mathrm{phys}}\) is allowable.  Radial
extension respects composition: writing
\(\eta(t)=tA(t^2)\) and \(q(t)=tB(t^2)\), one has
\[
 q^{\mathrm{phys}}\bigl(\eta^{\mathrm{phys}}(z)\bigr)
 =zA(|z|^2)B\bigl(|z|^2A(|z|^2)^2\bigr)
 =(q\circ \eta)^{\mathrm{phys}}(z)=\Gamma z.
\]
Apply the valid kernel represented by \(q\) to the two families of
feature vectors realizing \(\eta^{\mathrm{phys}}\).  This proves validity
of the composed linear kernel.
\end{proof}

\section{Choice of Kernels}

\subsection{Preprocessing the phase kernel}

We now describe the first five branches of our mixed rounding scheme, which use phase rounding after different preprocessings.  Let

$$
\rho(t)=\sum_{m\geq0}c_mt^{2m+1}=t\,p(t^2),
\qquad
p(x)=\sum_{m\geq0}c_mx^m,
\qquad
c_m\in\mathbb R,\quad \sum_{m\geq0}|c_m|\leq1.
$$

By the existence lemma for valid kernels (Lemma \ref{lem:wiener}), the radial extension
$\rho^{\mathrm{phys}}(z)=z\,p(|z|^2)$
is an allowable preprocessor. Composing this preprocessor with phase rounding produces the valid kernel

$$
K_\rho(z)=\mathfrak h\bigl(\rho^{\mathrm{phys}}(z)\bigr).
$$
Its odd analytic representative is \(h\circ\rho\).
For example, Haagerup’s preprocessor
$\rho_*(t)=h^{-1}(\gamma_Ht)$
belongs to this family and gives the exactly linear kernel
\(K_{\rho_*}(z)=\gamma_Hz\).
To compute the coefficients of \(K_\rho\), put \(x=|z|^2\) and
\(\alpha_j=h_j/\pi\). Since \(p\) has real coefficients, the expansion of \(\mathfrak h\) gives

\begin{equation}\label{eq:scalar-profile}
K_\rho(z)
=\pi z\sum_{j\geq0}\alpha_jx^jp(x)^{2j+1},
\qquad
\alpha_0=\frac14,\qquad
\frac{\alpha_{j+1}}{\alpha_j}
=\frac{(2j+1)^2}{4(j+1)(j+2)}.
\end{equation}

In particular, when the coefficients of \(\rho\) are rational, the coefficients of \(K_\rho\) can be computed exactly over \(\mathbb Q\), apart from the common factor \(\pi\).




\subsection{The sixth kernel}\label{subsec:origin}

We now define the sixth alternative.  Let
\(P(s)=\sum_{j=0}^9P_js^j\), where the following displayed decimals are
exact terminating rationals:
\begingroup\small
\begin{flalign*}
 &(\Re P_0,\ldots,\Re P_9)
 =
 (
-0.08698387653870097,
-1.2330310998031548,
0.7731241827583651,\\
&\qquad-0.22194697498795207,
0.02812176806963062,
-0.000900511816912206,\\
&\qquad-0.00016426639538512312,
0.00001934672964561798,
-0.0000007806468542918022,\\
&\qquad0.000000010807705028012223),\\
 &(\Im P_0,\ldots,\Im P_9)
 =(-0.9438672688639868,
1.7787014426224697,
-1.2883230878434588,\\
&\qquad0.37871541533769754,
-0.046163071426838245,
0.00035393667390378955,\\
&\qquad0.0004808219961183191,
-0.00004805019315921306,
0.0000018538964354898714,\\
&\qquad-0.0000000252261077693192).
\end{flalign*}
\endgroup
A simple univariate polynomial certificate proves
\begin{equation}
 |P(s)|>\frac35\qquad\forall\,s\geq0.
 \label{eq:P-nonzero}
\end{equation}
Put \(u(s)\colonequals P(s)/|P(s)|\).  For any $t\in\R$, write
$
 \operatorname{cay}(t)=\frac{1-t^2+2it}{1+t^2}\in S(\C).
$
Define the following exact parameters:
\begin{align}
C&\colonequals\operatorname{cay}\!\left(\frac{356080252073}{10^{12}}\right),
 &
A&\colonequals\operatorname{cay}\!\left(-\frac{1775}{10000}\right),
 &
 \delta&\colonequals\frac{11}{500},
 \label{eq:origin-parameters}\\
 u_0&\colonequals u(0),&
 z_0&\colonequals\overline C\,u_0^2,
 &
 D&\colonequals-iz_0,\qquad B\colonequals DA.
 \label{eq:origin-parameters-2}
\end{align}
Although \(u_0\) need not have rational coordinates, \(u_0^2\) does;
hence so do \(z_0,D,B\).  Define
\begin{equation}
 f(s)\colonequals u(s)\ph(s+\delta A),
 \qquad
 g(s)\colonequals C\overline{u(s)}\ph(s+\delta B),
 \label{eq:origin-fg}
\end{equation}
\begin{equation}
 F(z)\colonequals\ph(z)f(|z|^2),\qquad
 G(z)\colonequals\ph(z)g(|z|^2).
 \label{eq:origin-FG}
\end{equation}
Note that $F,G\colon\C\to S(\C)$.  Equations \eqref{eq:P-nonzero} and
\(\Im A,\Im B\neq0\) show that these phases are well-defined away from
irrelevant Gaussian null sets.  Moreover,
\begin{equation}
 f(0)\overline{g(0)}=z_0A\overline B=i.
 \label{eq:endpoint-cancellation}
\end{equation}
This exact cancellation removes the slow \(h_n\) term from the
real-coefficient tail obtained by the conjugation symmetrization below.

By Lemma~\ref{lem:wiener}, their direct phase-rounding kernel is valid.  It is charge-one equivariant, since $K(z)=\E[F(X_z)\overline{G(Y_z)}]$, so for any $\omega\in S(\C)$, since $(\omega X_z, Y_z)$ has covariance $\omega z$, we have $K_{F,G}(\omega z)=\E[F(\omega X_z)\overline{G(Y_z)}]=\omega K_{F,G}(z)$.  

\begin{lemma}
There exist complex numbers $(c_k)_{k\geq0}$ with $\sum_{k\geq0}|c_k|\leq1$ such that
\begin{equation}\label{three2}
K(z)=K_{F,G}(z)=\sum_{k\geq0}c_kz^{k+1}\overline z^{\,k},\qquad\forall\,z\in\overline\D.
\end{equation}
\end{lemma}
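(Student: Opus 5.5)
We expand $F$ and $G$ in the complex Hermite (bidegree) basis of $L^2(\C,\gamma)$, where $\gamma$ is the standard complex Gaussian measure. This yields a Mehler-type formula for $K_{F,G}$. The summability $\sum_k|c_k|\le1$ then follows from Cauchy--Schwarz together with Parseval, using $|F|=|G|=1$.

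Recall that $L^2(\C,\gamma)$ has the orthonormal basis of normalized complex Hermite polynomials $H_{p,q}(w)$ of bidegree $(p,q)$. Each $H_{p,q}$ is homogeneous of charge $p-q$ under $w\mapsto\omega w$. For a jointly proper standard pair $(X_z,Y_z)$ with $\E[X_z\overline{Y_z}]=z$, the Ornstein--Uhlenbeck (Mehler) kernel gives
\[
\E\bigl[H_{p,q}(X_z)\overline{H_{p',q'}(Y_z)}\bigr]=\mathbf 1_{(p,q)=(p',q')}\,z^p\overline z^{\,q}.
\]
I would justify this in one of two ways: via the generating function $\exp(\lambda w+\mu\overline w-\lambda\mu)$, or by writing $X_z=zY_z+\sqrt{1-|z|^2}\,W$ with $W$ independent and using that $H_{p,q}$ are eigenfunctions of the complex OU semigroup.

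The functions $F(z)=\ph(z)f(|z|^2)$ and $G(z)=\ph(z)g(|z|^2)$ are charge-one radial functions, i.e. $F(\omega w)=\omega F(w)$. By the orthogonality of distinct charges under the rotation action, only the coefficients with $p-q=1$ survive. Thus
\[
F=\sum_{k\ge0}a_kH_{k+1,k},\qquad G=\sum_{k\ge0}b_kH_{k+1,k}\quad\text{in }L^2(\gamma),
\]
with $a_k=\E[F\,\overline{H_{k+1,k}}]$, and similarly for $b_k$. These coefficients are essentially Laguerre coefficients of $f$ and $g$ in the variable $s=|w|^2$. By Parseval, $\sum_k|a_k|^2=\E|F|^2=1$ and $\sum_k|b_k|^2=1$.

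The main technical step is to substitute the expansions into the kernel and interchange expectation and summation. Since both series converge in $L^2(\gamma)$ and the joint law has standard marginals, the bilinear form $(\phi,\psi)\mapsto\E[\phi(X_z)\overline{\psi(Y_z)}]$ is bounded by $\|\phi\|_2\|\psi\|_2$. Hence the double sum converges, and Mehler's formula gives
\[
K_{F,G}(z)=\sum_{k\ge0}a_k\overline{b_k}\,z^{k+1}\overline z^{\,k}.
\]
Set $c_k\colonequals a_k\overline{b_k}$. Cauchy--Schwarz then gives $\sum_k|c_k|\le(\sum|a_k|^2)^{1/2}(\sum|b_k|^2)^{1/2}=1$.

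The main obstacle is making the Mehler identity and the charge selection rigorous with the correct normalization and conjugation conventions, so that the surviving terms are exactly $z^{k+1}\overline z^{\,k}$ and not, say, $\overline z^{\,k+1}z^k$. I would verify these conventions on the $k=0$ term: there $H_{1,0}(w)=w$ and $\E[X_z\overline{Y_z}]=z$, which confirms the orientation. For $|z|=1$, the identity follows by continuity, or directly since then $X_z=zY_z$ almost surely. Validity of $K_{F,G}$ itself is already given by Lemma~\ref{lem:wiener}.
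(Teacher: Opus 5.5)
Your proposal is correct and follows essentially the same route as the paper. Both arguments expand $F,G$ in the charge-one Hermite basis (the paper writes it explicitly as the Laguerre functions $\phi_k$), apply the Mehler identity to obtain $c_k=a_k\overline{b_k}$, and conclude with Cauchy--Schwarz and Bessel/Parseval. Your justification of the term-by-term interchange, via the bound $\|\phi\|_2\|\psi\|_2$ on the bilinear form, fills in a step the paper leaves implicit.
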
 
\begin{proof}
Let \(L_k^{(1)}\) be the generalized Laguerre polynomial and define
\begin{equation}\label{phidef}
L_k^{(1)}(x)\colonequals \frac{x^{-1}e^x}{k!}\frac{d^k}{dx^k}(e^{-x} x^{k+1})
,\qquad
       \phi_k(z)\colonequals \frac{zL_k^{(1)}(|z|^2)}{\sqrt{k+1}}.
\end{equation}
\begin{equation}
\begin{aligned}
 a_k&\colonequals\int_0^\infty e^{-s}
 \frac{\sqrt{s}L_k^{(1)}(s)}{\sqrt{k+1}}f(s)\,ds
 =\langle F,\phi_k\rangle,\\
b_k&\colonequals\int_0^\infty e^{-s}
 \frac{\sqrt{s}L_k^{(1)}(s)}{\sqrt{k+1}}g(s)\,ds
 =\langle G,\phi_k\rangle.
 \end{aligned}
 \label{eq:origin-coefficients}
\end{equation}
Then $(\phi_k)_{k\geq0}$ are normalized
charge-one complex Hermite functions, since
\[
 \int_0^\infty se^{-s}L_j^{(1)}(s)L_k^{(1)}(s)\,ds
 =(k+1)\mathbf1_{\{k=j\}}.
\]
In particular, $\E|\phi_k(Z)|^2=1$.  By completeness of the generalized Laguerre polynomials,
\((\phi_k)_{k\geq0}\) is an orthonormal basis of the charge-one subspace
of complex Gaussian \(L_2\).  

Also, $\phi_k(\omega z)=\omega\phi_k(z)$ for all $\omega\in S(\C)$.  By Laguerre orthogonality, $\phi_k$ is orthogonal to every charge-one polynomial of smaller degree.  Also $\phi_k$ has bidegree $(k+1,k)$, so the span of $\phi_k$ is the charge-one Hermite component $\mathcal H_{k+1,k}$.  Let \(\Pi_k\) denote
the orthogonal projection onto the span of $\phi_k$.  Here all function inner
products and \(L_2\) norms are taken with respect to standard complex
Gaussian measure.  

Let $z\in\overline\D$.  The Mehler identity says if $H\in\mathcal H_{p,q}$ is a Hermite component with bidegree $p,q$, then $T_z H = z^{p}\overline{z}^q H$, where
$$T_z\psi(y)
\colonequals\E \psi(zy+W\sqrt{1-|z|^2}),\qquad\forall\,y\in\C,\,\,\forall\,\psi\colon\C\to\C\text{ with }\E|\psi(Y)|^2<\infty,$$
where $Y,W$ are i.i.d. standard complex Gaussians, $X_z\colonequals zY+W\sqrt{1-|z|^2}$ and $\E X_z\overline Y=z$,
\begin{equation}\label{kbid}
K(z)
\stackrel{\eqref{kfgdef}}{=}\E[F(X_z)\overline{G(Y)}]=\langle T_z F,G\rangle.
\end{equation}
Since $F,G$ are charge-one equivariant almost surely ($F(\omega z)=\omega F(z)$ for all $\omega\in S(\C)$ and a.e. $z\in\C$), their Hermite decompositions have only $\Pi_k$ components, i.e. $F=\sum_{k\geq 0}\Pi_k F$, hence $T_z F=\sum_{k\geq 0}z^{k+1}\overline{z}^k \Pi_k F$, so that 
\begin{equation}\label{kzdef}
K(z)
\stackrel{\eqref{kbid}}{=}\langle T_z F,G\rangle
=\sum_{k\geq0}z^{k+1}\overline{z}^k\langle\Pi_k F,G\rangle
=\sum_{k\geq0}z^{k+1}\overline{z}^k\langle\Pi_k F,\Pi_k G\rangle.
\end{equation}
Define now for any $k\geq0$,
\begin{equation}\label{bkid}
 c_k\colonequals\ip{\Pi_kF}{\Pi_kG}
 \stackrel{\eqref{phidef}}{=}\ip{F}{\phi_k}\ip{\phi_k}{G}
 \stackrel{\eqref{eq:origin-coefficients}}{=}a_k\overline{b_k}.
\end{equation}

By \eqref{bkid}, 
Hermite orthogonality and the Cauchy--Schwarz inequality give
\begin{equation}\label{three3}
 \sum_{k\geq0}|c_k|
 \leq\left(\sum_{k\geq0}\|\Pi_kF\|_2^2\right)^{1/2}
      \left(\sum_{j\geq0}\|\Pi_jG\|_2^2\right)^{1/2}\leq1.
\end{equation}

\end{proof}

Let
\[
 F^\#(z)\colonequals\overline{F(\overline z)},\qquad
 G^\#(z)\colonequals\overline{G(\overline z)},\qquad\forall\,z\in\C.
\]
In contrast to \eqref{three2}, $K_{F^\#,G^\#}(z)=\sum_{k\geq0}\overline{c_k}z^{k+1}\overline z^{\,k}$ by \eqref{bkid} since $\phi_k(\overline{z})=\overline{\phi_k(z)}$, $\forall$ $k\geq0$.

By Lemma \ref{lem:wiener}, \begin{equation}\label{eq:origin-kernel}
K_{\mathrm{org}}\colonequals(K_{F,G}+K_{F^\# ,G^\#})/2
\end{equation}
is a valid kernel.  Moreover, its $n^{th}$ coefficient is
\begin{equation}\label{kndef}
k_n\colonequals (c_n + \overline{c_n})/2=\Re(c_n)\in\R.
\end{equation}
Finally, the triangle inequality implies that $\sum_{n\geq0}|k_n|\leq 1$.

\section{The mixed rounding scheme}\label{sec:candidate}

In this section and the two certification sections, \(\rho_r\),
\(1\leq r\leq5\), denotes an odd analytic representative in \(\A\), and
\(\rho_r^{\mathrm{phys}}\) denotes its radial extension.  For any
\(\rho\in\A\) whose radial extension is allowable, write
\begin{equation}
 K_\rho(z)\colonequals\mathfrak h\bigl(\rho^{\mathrm{phys}}(z)\bigr).
 \label{eq:scalar-kernel-notation}
\end{equation}
The five scalar preprocessors used in the final candidate are now defined
directly.  Every decimal in \eqref{eq:low-scalar-preprocessors} is an exact terminating
rational, not a rounded display value:
\begin{align}
 \rho_1(t)={}& 0.906231648312t-0.093651693922t^3
 -0.000116657762t^9,\notag\\
 \rho_2(t)={}& 0.910377315558t-0.063651630829t^3
 -0.025971053610t^7,\notag\\
 \rho_3(t)={}&0.912512978184t-0.035570362029t^3
 -0.050646289677t^7-0.001270370107t^9,\notag\\
 \rho_4(t)={}& 0.901913867354t-0.037111896010t^3
 -0.026391249065t^7\notag\\
 &-0.033420629518t^{11}-0.001162358049t^{15}.
 \label{eq:low-scalar-preprocessors}
\end{align}
Note that these are different perturbations of $\rho_*$ from \eqref{rhostdef}, since by \cite{Haagerup},
$$\rho_*(t)
\approx 0.9062789294t
-0.093045561673t^3
-0.00049038053603t^7
-0.00012586558875t^9
-\cdots
$$
For the fifth (high-degree) preprocessor, put
\begin{equation}
 \rho_5(t)=\sum_m c_{5,m}t^{2m+1},
 \label{eq:rho-5}
\end{equation}
where every coefficient not listed here is zero and the listed values are
again exact:
\begin{equation}
\begin{array}{c@{\;}r@{\qquad}c@{\;}r@{\qquad}c@{\;}r}
m&c_{5,m}&m&c_{5,m}&m&c_{5,m}\\ \hline
0& 0.094626045381&1&-0.856254413568&10& 0.001274576105\\
13&0.008312996734&14&0.001885453067&15&0.001662844635\\
16&0.009295485863&17&0.009399425108&18&0.008540450681\\
19&0.000847985940&20&0.003771690693&21&0.000873880329\\
27&-0.000419180236&29&-0.000311954497&30&-0.000455650279\\
31&-0.000996284704&35&-0.000865568525&37&0.000206113648
\end{array}
\label{eq:rho-5-coefficients}
\end{equation}
Thus, if \(\rho_r(t)=\sum_m c_{r,m}t^{2m+1}\), the actual correlation
applied to inner products is
\begin{equation}
 \rho_r^{\mathrm{phys}}(z)=\sum_m c_{r,m}z|z|^{2m},
 \qquad r\in\{1,2,3,4,5\}.
 \label{eq:five-physical-preprocessors}
\end{equation}
Exact summation gives the especially simple Wiener norms
\begin{equation} \label{eq:five-preprocessor-norms}
 \|\rho_1\|_{\A}=\|\rho_4\|_{\A}
 =\frac{999999999996}{10^{12}},
 \|\rho_2\|_{\A}=\|\rho_3\|_{\A}
 =\frac{999999999997}{10^{12}},
 \|\rho_5\|_{\A}=\frac{999999999993}{10^{12}}.
\end{equation}
Consequently all preprocessors $\{\rho_r^{\rm phys}\}_{r=1}^5$ are allowable by
Lemma~\ref{lem:wiener}, and each \(K_{\rho_r}\) is a valid Gaussian
phase-rounding kernel for each $1\leq r\leq 5$.  

With the sixth kernel from \eqref{eq:origin-kernel}, label the corresponding top-level kernels by
\begin{equation}
 K_r\colonequals K_{\rho_r}\quad(1\leq r\leq5),
 \qquad K_6\colonequals K_{\mathrm{org}},
 \qquad K_{6,+}\colonequals K_{F,G},\quad K_{6,-}\colonequals K_{F^\#,G^\#}.
 \label{eq:numbered-branch-kernels}
\end{equation}
Thus \(K_6=(K_{6,+}+K_{6,-})/2\) by \eqref{eq:origin-kernel}.  Define also
\begin{equation}\label{wdefs}
\begin{aligned}
 w_1&\colonequals0.983543086263686583,
 &w_2&\colonequals0.0124174018143118538,\\
 w_3&\colonequals0.0030311909556774751,
 &w_4&\colonequals0.000593632758261406976,\\
 w_5&\colonequals0.0000169358532612748746,
 &w_6&\colonequals0.000397752354801503847.
\end{aligned}
\end{equation}
The unnormalized kernel used in Theorem \ref{thm:main} is then
\begin{equation}
 \widehat Q
\colonequals\sum_{r=1}^6w_rK_r
=\sum_{r=1}^5w_rK_r
 \quad+\frac{w_6}{2}K_{6,+}+\frac{w_6}{2}K_{6,-}.
 \label{eq:sparse-kernel-definition}
\end{equation}

\section{Certifying the Upper Bound}\label{sec:prefix}

\subsection{Bounds for the first 120 coefficients}
Let
\begin{equation}
 \widehat Q(z)
 =\sum_{k\geq0}\widehat q_kz|z|^{2k}
 \label{eq:Q}
\end{equation}
be the sum defined in \eqref{eq:sparse-kernel-definition}.  By Lemmas~\ref{lem:inverse-repair} and~\ref{lem:linear},
\[
 K_G^{\C}\leq
 \frac{\sum_{r=1}^6w_r}
 {\widehat q_0-\sum_{k\geq1}|\widehat q_k|}
 \qquad\text{when the denominator is positive.}
\]
The following estimates bound this denominator from below.

The rational part of
every scalar contribution to
\(\widehat q_0,\ldots,\widehat q_{119}\) is computed exactly from
\eqref{eq:scalar-profile}, and the common factor \(\pi\) is enclosed by
directed ball arithmetic.  The contribution of $K_{\mathrm{org}}$ to the sum \eqref{eq:Q} is estimated by bounding the coefficients \eqref{eq:origin-coefficients}.
Here and below, hats mark coefficients of the raw, unnormalized candidate.
 Combining these bounds with the
weights in \eqref{wdefs} gives
\begin{equation}
 \sum_{k=1}^{119}|\widehat q_k|
 <1.074829404498064\cdot10^{-6},
 \qquad
 \widehat q_0>0.71179640805640433452.
 \label{eq:slope-bound}
\end{equation}

The bounds on \eqref{eq:origin-coefficients} integrate over \(0\leq s\leq25\), using the substitution
\(s=t^2\).  A second table bounds the integrals over \(25\leq s\leq36\); the
verifier adds the corresponding interval enclosures before forming products.  If
\(\widetilde a_n,\widetilde b_n\) are the two coefficient sequences whose integrals are truncated
at \(s=36\), and
\(e_n^{(a)}=a_n-\widetilde a_n\),
\(e_n^{(b)}=b_n-\widetilde b_n\) are the omitted sequences, Bessel's
inequality together with $|F|=|G|=1$ give
\begin{equation}
 \|(e_n^{(a)})_{n\geq0}\|_{\ell_2}
 \leq\|F 1_{|z|^2>36}\|_{L_2}
 \leq e^{-18},
 \qquad
 \|(e_n^{(b)})_{n\geq0}\|_{\ell_2}
 \leq\|G 1_{|z|^2>36}\|_{L_2}
 \leq e^{-18}.
 \label{eq:omitted-L2}
\end{equation}
Thus, for any finite index block \(\mathcal I\),
\begin{equation}
 \sum_{n\in \mathcal I}|a_n\overline{b_n}
     -\widetilde a_n\overline{\widetilde b_n}|
 \leq e^{-18}\bigl(\|\widetilde a_{\mathcal I}\|_2
     +\|\widetilde b_{\mathcal I}\|_2\bigr)+e^{-36}.
 \label{eq:joint-omission}
\end{equation}

The file
\nolinkurl{complex_upper_origin_layer_coefficients_n120_analytic.npz} is the accepted interval table.  The companion table is \nolinkurl{origin_tail_25_36.npz}.  Writing \(P=R+iI\), where \(R\) and \(I\) are the real-coefficient
polynomials \(\Re P\) and \(\Im P\), the table generator evaluates them
separately and uses the analytic continuation
\((R+iI)/\sqrt{R^2+I^2}\) on complex integration balls.  This point is
important: projecting the real and imaginary parts of a single complex
ball inside an analytic integration callback would not be a valid
holomorphic continuation.


\subsection{Tail bound for the sixth kernel}
\label{sec:tail}
We now bound $\sum_{n\geq120}|k_n|$, where $k_n$ is defined in \eqref{kndef}.  
The Laguerre integral
\[
 \int_0^\infty e^{-s}\frac{\sqrt{s}L_n^{(1)}(s)}{\sqrt{n+1}}\,ds
 \stackrel{\eqref{eq:origin-coefficients}\wedge\eqref{kzdef}\wedge\eqref{eq:haagerup-phase}}{=}\sqrt{h_n},\qquad\forall\,n\geq0
\]
shows that the coefficient of the constant radial amplitude
\(\ph(z)\) is \(\xi_n\colonequals\sqrt{h_n}\), consistently with
\eqref{eq:origin-coefficients}.  Write
\begin{equation}
\begin{aligned}
 &a_n=f(0)\xi_n+r_n,\qquad b_n=g(0)\xi_n+t_n,\\
 &R_f(z)\colonequals\ph(z)[f(|z|^2)-f(0)]
 \equalscolon \sum_{n\geq0} r_n\phi_n,\\
 &R_g(z)\colonequals\ph(z)[g(|z|^2)-g(0)]
 \equalscolon \sum_{n\geq0} t_n\phi_n.
 \end{aligned}
 \label{eq:endpoint-subtraction}
\end{equation}
Recall from \eqref{bkid} that
\(c_n=a_n\overline{b_n}\) and $k_n=\Re(c_n)$ from \eqref{kndef}.  Substituting
\eqref{eq:endpoint-subtraction} and using
\eqref{eq:endpoint-cancellation}, together with
\(\xi_n=\sqrt{h_n}\), gives
\[
k_n
=\Re\!\left[
 f(0)\overline{g(0)}\,h_n
 +f(0)\xi_n\overline{t_n}
 +r_n\overline{g(0)}\,\xi_n
 +r_n\overline{t_n}
 \right]
=\Re\!\left[
 f(0)\xi_n\overline{t_n}
 +r_n\overline{g(0)}\,\xi_n
 +r_n\overline{t_n}
 \right].
\]
Since \(|f(0)|=|g(0)|=1\) and
\(|\xi_n|=\sqrt{h_n}\), it follows that
\begin{equation} \label{eq:origin-pointwise-tail}
 |k_n|
 \leq
 \sqrt{h_n}\bigl(|r_n|+|t_n|\bigr)
 +|r_n||t_n|.
\end{equation}

Denote $z\in\C$ by $z=x+y\sqrt{-1}$, $x,y\in\R$ and define $\mathcal N\colonequals -\frac{1}{2}(\partial_x^2 + \partial_y^2)+x\partial_x +y\partial_y$.  Write $z=re^{i\theta}$ and $s=r^{2}$.  In polar coordinates, $\Delta=\partial_r^2+r^{-1}\partial_r + r^{-2}\partial^2_{\theta}$.  If $\Phi(z)=e^{i\theta}v(s)$, then $\Delta\Phi=e^{i\theta}(4sv''(s)+4v'(s)-v(s)/s)$ and $(x\partial_x+y\partial_y)\Phi=r\partial_r\Phi=e^{i\theta}2sv'(s)$.  Consequently,
\begin{equation}\label{nformula}
 \mathcal N\bigl(v(s)\ph(z)\bigr)
 =\ph(z)\left[-2sv''(s)+(2s-2)v'(s)+\frac{v(s)}{2s}\right].
\end{equation}
When $v(s)=\sqrt{s}L_n^{(1)}(s)$, the Laguerre differential equation $sL_n^{(1)}{}''(s)+(2-s)L_n^{(1)}{}'(s)+nL_n^{(1)}(s)=0$ with \eqref{nformula} says
\[
-2sv''(s)+(2s-2)v'(s)+\frac{v(s)}{2s}
=\sqrt{s}\Big(L_n^{(1)}(s)+2(s-2)L_n^{(1)}{}'(s)-2sL_n^{(1)}{}''(s)\Big)
=(2n+1)v(s).
\]
%
%

Consequently, by \eqref{phidef},
\begin{equation}\label{neigeq}
\mathcal N \phi_n=(2n+1)\phi_n,\qquad\forall\,n\geq0.
\end{equation}

We apply \eqref{nformula} to \(v=f-f(0)\) and \(v=g-g(0)\), where $f,g$ are defined in \eqref{eq:origin-fg}.  The nonvanishing
estimate \eqref{eq:P-nonzero} and $\Im A,\Im B\neq0$ by \eqref{eq:origin-parameters}, \eqref{eq:origin-parameters-2} imply that \(f\) and \(g\) are twice continuously differentiable on
\([0,\infty)\), so
\[
 f(s)-f(0)=O(s),\qquad g(s)-g(0)=O(s)\qquad(s\downarrow0).
\]
For our choice of $v$, the square brackets in \eqref{nformula}
are then bounded near \(s=0\) and square integrable against \(e^{-s}\,ds\).
Since the phase function is discontinuous at zero, we can use a smooth cutoff near zero (and infinity, since $v$ and its derivatives have polynomial growth) which then converges in the norm $\sqrt{\|\cdot\|_2^2+\|\mathcal N(\cdot)\|_2^2}$ by \eqref{nformula}.
Hence $R_f,R_g\in\operatorname{Dom}(\mathcal N)$.  Then \eqref{neigeq} implies that
\begin{equation}
 \|\mathcal N R_f\|_2^2
 \stackrel{\eqref{eq:endpoint-subtraction}\wedge\eqref{neigeq}}{=}\sum_{n\geq0}(2n+1)^2|r_n|^2,
 \qquad
 \|\mathcal N R_g\|_2^2
 =\sum_{n\geq0}(2n+1)^2|t_n|^2.
 \label{eq:energies}
\end{equation}
Directed integration, including
explicit near-zero and far-tail estimates, proves
\begin{equation}
 \|\mathcal N R_f\|_2^2<12.671,
 \qquad \|\mathcal N R_g\|_2^2<12.674.
 \label{eq:total-energies}
\end{equation}

\begin{equation}
 \sum_{n<120}(2n+1)^2|r_n|^2>11.497,\qquad
 \sum_{n<120}(2n+1)^2|t_n|^2>11.498.
 \label{eq:visible-energies}
\end{equation}
Thus the two tail energies are less than \(1.174\) and \(1.176\).
Lemma \ref{lem:wallis} and \eqref{eq:h-coefficients} imply
\begin{equation}
 W_{120}\colonequals\sum_{n\geq120}\frac{h_n}{(2n+1)^2}
 \leq\frac{1}{48\cdot119^3}.
 \label{eq:weighted-Wallis}
\end{equation}
Applying Cauchy--Schwarz to \eqref{eq:origin-pointwise-tail} while multiplying and dividing by $(2n+1)$ gives
\begin{equation}
 T_6\colonequals\sum_{n\geq120}|k_n|
 \leq\sqrt{1.174\,W_{120}}+\sqrt{1.176\,W_{120}}
 +\frac{\sqrt{1.174\cdot1.176}}{241^2}
 <\frac{523}{2{,}000{,}000}=2.615\cdot10^{-4}.
\label{eq:branch-6-tail}
\end{equation}

\begin{lemma}[Wallis tails]\label{lem:wallis}
Recall $h_k$ from \eqref{eq:h-coefficients}.  For every integer \(N\geq1\),
\begin{equation}
 \sum_{k\geq N}h_k\leq\frac1{4N}.
 \label{eq:Wallis-tail}
\end{equation}
Moreover, for all \(N\geq2\),
\begin{equation}
 \sum_{k\geq N}\frac{h_k}{(2k+1)^2}
 \leq\frac1{48(N-1)^3}.
 \label{eq:weighted-Wallis-general}
\end{equation}
\end{lemma}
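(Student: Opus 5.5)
The plan is to reduce both estimates to a pointwise bound on $h_k$ and then compare with an integral. Set $\beta_k\colonequals\binom{2k}{k}/4^k$, so that $h_k=\frac{\pi}{4(k+1)}\beta_k^2$.

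\emph{Step 1: a pointwise bound on $\beta_k$.} I would use the standard Wallis bound $\beta_k^2\leq \frac{1}{\pi(k+1/2)}$. It follows from the monotonicity of $(k+\tfrac12)\beta_k^2$: the ratio of consecutive terms is
\[
\frac{(2k+1)^2(2k+3)}{(2k+2)^2(2k+1)}=\frac{(2k+1)(2k+3)}{(2k+2)^2}<1,
\]
so the sequence increases, and its limit is $1/\pi$ by Wallis' product. This gives
\[
h_k\leq \frac{1}{4(k+1)(k+\tfrac12)}.
\]
The bound is sharp enough, because the constants $1/4$ and $1/48$ in the lemma match the leading asymptotics $h_k\sim 1/(4k^2)$ exactly.

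\emph{Step 2: the first tail bound.} Since $(k+1)(k+\tfrac12)\geq k(k+1)$, we get $h_k\leq \frac14\bigl(\frac1k-\frac1{k+1}\bigr)$. The sum over $k\geq N$ telescopes to $\frac{1}{4N}$, which proves \eqref{eq:Wallis-tail}. An alternative route uses the identity $\sum_k h_k=1$ together with the ratio formula in \eqref{eq:h-coefficients}, which yields an exact telescoping $h_k=\frac{\pi}{4}\bigl(\beta_k^2-\beta_{k+1}^2\bigr)\cdot\frac{\cdots}{\cdots}$. I would only use that route if the simple bound failed, and it does not.

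\emph{Step 3: the weighted bound.} Here $\frac{h_k}{(2k+1)^2}\leq \frac{1}{8(k+1)(k+\frac12)^3}$. I want to show this is at most $\frac{1}{16}\int_{k-1}^{k}x^{-4}\,dx$, or more simply at most $\frac{1}{16}\bigl((k-1)^{-3}-k^{-3}\bigr)\cdot\frac13$. Summing over $k\geq N$ then gives $\frac{1}{48(N-1)^3}$.

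The needed per-term inequality is
\[
\frac{1}{8(k+1)(k+\tfrac12)^3}\leq \frac{1}{16}\int_{k-1}^{k}x^{-4}\,dx .
\]
Since $\int_{k-1}^k x^{-4}\,dx\geq k^{-4}$, it suffices to have $\frac{1}{8(k+1)(k+1/2)^3}\leq \frac{1}{16k^4}$, that is, $2k^4\leq (k+1)(k+\tfrac12)^3$. This fails for large $k$, since the left side is asymptotically twice the right.

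I would therefore recompute more carefully, using $\frac{1}{(2k+1)^2}=\frac{1}{4(k+\tfrac12)^2}$:
\[
\frac{h_k}{(2k+1)^2}\leq \frac{1}{16(k+1)(k+\tfrac12)^3}.
\]
With this corrected constant, the required inequality $k^4\leq (k+1)(k+\tfrac12)^3$ holds trivially. Summing gives
\[
\sum_{k\geq N}\frac{1}{16}\int_{k-1}^k x^{-4}\,dx=\frac{1}{48(N-1)^3},
\]
which is \eqref{eq:weighted-Wallis-general}. The restriction $N\geq2$ is needed so that the integral avoids the singularity at $x=0$.

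The only delicate point is justifying the Wallis monotonicity bound in Step 1; the rest is bookkeeping of constants.
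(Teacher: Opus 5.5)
\textbf{Step 1 is false as stated, so both of your tail estimates currently rest on a false inequality.}

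You correctly compute that consecutive terms of $(k+\tfrac12)\beta_k^2$ have ratio $\frac{(2k+1)(2k+3)}{(2k+2)^2}<1$. But a ratio below one means the sequence \emph{decreases} to its limit $1/\pi$, not that it increases. What you have actually proved is the reverse inequality $\beta_k^2\geq\frac{1}{\pi(k+1/2)}$. The upper bound you claim fails already at $k=1$: $\beta_1^2=\tfrac14>\tfrac{2}{3\pi}\approx0.212$. Correspondingly, $h_1=\pi/32\approx0.098$ exceeds $\frac{1}{4\cdot 2\cdot\frac32}=\tfrac1{12}$. In fact, since the decrease is strict, both $h_k\le\frac{1}{4(k+1)(k+1/2)}$ and the Step 3 bound $\frac{h_k}{(2k+1)^2}\le\frac{1}{16(k+1)(k+1/2)^3}$ fail for every $k$. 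Your remark that these bounds are "sharp enough" is therefore based on estimates that undershoot the truth.

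\textbf{The repair is the Wallis estimate the paper uses.} The sequence $k\beta_k^2$ has consecutive ratio $\frac{(2k+1)^2}{4k(k+1)}>1$. So it increases to $1/\pi$, which gives $\beta_k^2\le\frac{1}{\pi k}$ and hence $h_k\le\frac{1}{4k(k+1)}$ for $k\ge1$. This is exactly the inequality you reach at the end of Step 2, so your telescoping there goes through unchanged.

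For Step 3, combine this with $(2k+1)^2\ge4k^2$ to get
\[
\frac{h_k}{(2k+1)^2}\le\frac{1}{16k^3(k+1)}\le\frac{1}{16k^4}\le\frac1{16}\int_{k-1}^k x^{-4}\,dx .
\]
Summing over $k\ge N\ge2$ gives the bound $\frac{1}{48(N-1)^3}$. With that one correction, your argument coincides with the paper's proof.
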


\begin{proof}
Wallis' central-binomial estimate
\(\binom{2k}{k}^2/16^k\leq1/(\pi k)\), together with
\eqref{eq:h-coefficients}, gives
\[
 h_k\leq\frac1{4k(k+1)}\qquad\forall\,k\geq1.
\]
Using $1/(k(k+1)) = 1/k - 1/(k+1)$,  $\sum_{k\geq N}1/(k(k+1))=1/N$, proving \eqref{eq:Wallis-tail}.  Also,
\[
 \frac{h_k}{(2k+1)^2}
 \leq\frac1{16k^3(k+1)}
 \leq\frac1{16k^4}.
\]
Comparison with \(\int_{N-1}^{\infty}x^{-4}\,dx\) proves
\eqref{eq:weighted-Wallis-general}.
\end{proof}

\subsection{Tail bound for the fifth (high-degree) preprocessor}
\label{subsec:high-degree-tail}

As in \eqref{three2}, write
\begin{equation}\label{k5def}
K_5(z)
\stackrel{\eqref{krhodef}}{=}\mathfrak{h}(\rho_5^{\rm phys}(z))
\stackrel{\eqref{eq:numbered-branch-kernels}}{\equalscolon}
\sum_{n\geq0}k_{5,n}z|z|^{2n}.
\end{equation}
In this section we will show that
\begin{equation}\label{eq:high-degree-total-tail}
\tau_5\colonequals w_5\sum_{n\geq120}|k_{5,n}|
<6.319618149454563\cdot10^{-8}.
\end{equation}

Write \(\rho_5^{\mathrm{phys}}(z)=zp_5(|z|^2)\), as specified in \eqref{eq:rho-5} and \eqref{eq:rho-5-coefficients} whose largest nonzero coefficient index is $37$.  The polynomial \(x^jp_5(x)^{2j+1}\) in
\eqref{eq:scalar-profile} has degree at most
\begin{equation}
 j+(2j+1)37=75j+37.
 \label{eq:high-degree-block-support}
\end{equation}
Thus the blocks \(0\leq j<67\) have a tail contribution bounded by indices $120\leq n\leq5000$ in \eqref{eq:high-degree-total-tail}.  From \eqref{eq:analytic-h}, we have
$h(\rho_5(t))=\sum_{j\geq0}h_j (\rho_5(t))^{2j+1}.$  Combined with \eqref{k5def}, we have
\[
k_{5,n}
=[t^{2n+1}]\sum_{j\geq0}h_j\rho_5(t)^{2j+1}.
\]
Directed
evaluation gives
\begin{equation}
 \sum_{n=120}^{5000}
 \left|w_5[t^{2n+1}]
       \sum_{j=0}^{66}h_j\rho_5(t)^{2j+1}\right|
 <2.699176355797438\cdot10^{-12}.
 \label{eq:high-degree-finite-tail}
\end{equation}

Recall \(\|\rho_5\|_{\A}\leq1\), so \eqref{eq:five-preprocessor-norms}, \eqref{eq:wiener-submultiplicative} and Lemma~\ref{lem:wallis} bound tail terms omitted from \eqref{eq:high-degree-finite-tail} by
\begin{equation}
w_5\sum_{j\geq67}|h_j|\|\rho_5^{2j+1}\|_{\A}
\leq
w_5\sum_{j\geq67}|h_j|
\leq
 \frac{|w_5|}{4\cdot67}
 <6.3193482318189831\cdot10^{-8}.
 \label{eq:high-degree-outer-tail}
\end{equation}

Combining \eqref{eq:high-degree-finite-tail} and \eqref{eq:high-degree-outer-tail} gives \eqref{eq:high-degree-total-tail}.

\subsection{Tail bound for the first four preprocessors}

In this section, we bound the first four terms in \eqref{eq:sparse-kernel-definition}.  For any \(1\leq r\leq4\), write
\(\rho_r^{\mathrm{phys}}(z)=zp_r(|z|^2)\) and
\(\rho_r^{\mathrm{an}}(\zeta)\colonequals\rho_r(\zeta)=\zeta p_r(\zeta^2)\).  Use the radii
\begin{equation}
 R_1=\frac65,
 \qquad
 R_2=R_3=R_4=\frac{111}{100}.
 \label{eq:scalar-tail-radii}
\end{equation}

By \eqref{hint}, $h(w)$ is equal to the following expression for all $w\in(-1,1)$
\begin{equation}\label{heq}
h_{\Omega}(w)\colonequals w\int_{0}^{\pi/2}\frac{\cos^2\theta}{\sqrt{1-w^2\sin^2\theta}}d\theta.
\end{equation}
Denote $\Omega\colonequals\C\setminus((-\infty,-1]\cup[1,\infty))$.  Then $h_{\Omega}$ is well-defined and holomorphic for any $w\in\Omega$, since for all $0\leq v\leq1$, $1-vw^2\notin(-\infty,0]$ (if $1-vw^2\in(-\infty,0]$, then $vw^2\in[1,\infty)$, violating $w\in\Omega$).  In defining $h_{\Omega}$, we set $\sqrt{1}\colonequals1$.  Meanwhile, we claim that
\begin{equation}\label{rhoclaim}
\rho_r^{\rm an}(\overline{R_r\D})\subset\Omega.
\end{equation}
Given this claim, we can input $w\colonequals \rho_r^{\rm an}(\zeta)$ into $h_{\Omega}$ in \eqref{heq} for any $\zeta\in\overline{R_r\D}$.  To prove \eqref{rhoclaim}, the code verifies that, for all $1\leq r\leq 4$, for any $|\zeta|=R_r$ with $w\colonequals\rho_r^{\rm an}(\zeta)$,
\begin{equation}\label{wineq}
|w|=|\rho_r^{\rm an}(\zeta)|
\stackrel{\eqref{eq:five-physical-preprocessors}}{\leq}\sum_{m}|c_{r,m}|R_r^{2m+1}
<5/4,
\quad\text{and also}\quad
\min_{0\leq v\leq 1}|1-vw^2|>1/100.
\end{equation}
For any $0\leq v\leq 1$, define $F_{r,v}(\zeta)\colonequals1 - v\cdot(\rho_r^{\rm an}(\zeta))^2$.  By \eqref{wineq}, we may define
\[
N_r(v)\colonequals\frac{1}{2\pi\sqrt{-1}}\int_{|\zeta|=R_r}\frac{F_{r,v}'(\zeta)}{F_{r,v}(\zeta)}\,d\zeta,
\]
which is the number of zeros of $F_{r,v}$ in the disc $|\zeta|\leq R_r$ (no zeros occur on the boundary $|\zeta|=R_r$ by \eqref{wineq}).  From \eqref{wineq}, $N_r$ is continuous in $v$ and integer-valued, but $F_{r,0}=1$, so $N_{r}(v)=0$ for all $0\leq v\leq 1$, hence $F_{r,v}(\zeta)$ has no zeros in $|\zeta|\leq R_r$ for all $0\leq v\leq 1$.  So, if $\rho_r^{\rm an}(\zeta)=x\in\R$ with $|x|\geq1$, then $v\colonequals x^{-2}\in(0,1]$ implies $F_{r,v}(\zeta)=1-x^2 x^{-2}=0$, a contradiction.  That is, we proved \eqref{rhoclaim}.  Thus \eqref{heq} holds for $w=\rho_r^{\rm an}(\zeta)$ and $|\zeta|\leq R_r$, so

\[
|h_{\Omega}(w)|
\leq
|w|\int_0^{\pi/2}\frac{\cos^2\theta}{\sqrt{|1-w^2\sin^2\theta|}}d\theta
<\frac{5}{4}\cdot10\int_{0}^{\pi/2}\cos^2\theta d\theta
=25\pi/8<10.
\]
Recall $K_r(z)\stackrel{\eqref{krhodef}}{=}\mathfrak{h}(\rho_r^{\rm phys}(z))\equalscolon\sum_{n\geq0}k_{r,n}z|z|^{2n}$.  Define $H_{r}(\zeta)\colonequals h_{\Omega}(\rho_r^{\rm an}(\zeta))$.  Then $H_r(\zeta)=\sum_{n\geq0}k_{r,n}\zeta^{2n+1}$, and we showed that $|H_r(\zeta)|<10$ for all $|\zeta|=R_r$.

Cauchy's estimate then implies that, for all $1\leq r\leq 4$,
\begin{equation}
 T_r
 \colonequals
 \sum_{n\geq120}|k_{r,n}|
 \leq10\sum_{n\geq120}R_r^{-(2n+1)}
 =\frac{10R_r^{-241}}{1-R_r^{-2}}.
 \label{eq:scalar-tail-reserves}
\end{equation}
Their total raw weighted contribution is
\begin{equation}
 \sum_{r=1}^4|w_r|T_r
 <1.017179644647\cdot10^{-11}.
 \label{eq:low-degree-tail}
\end{equation}

Finally, \eqref{eq:branch-6-tail} and the origin weight give
\begin{equation}
 |w_6|T_6
 <1.040122407805933\cdot10^{-7}.
 \label{eq:weighted-origin-tail}
\end{equation}

\subsection{Putting together the upper bound}\label{sec:assembly}

\begin{proof}[Proof of Theorem \ref{thm:main}, Upper Bound]
The sum of the unnormalized weights is
\begin{equation}
 \mathcal M\colonequals\sum_{r=1}^6 w_r
 \stackrel{\eqref{wdefs}}{=}\frac{1250000000000000121997}
        {1250000000000000000000}
 =1.0000000000000000975976.
 \label{eq:branch-mass}
\end{equation}
The
normalized selector probabilities are \(w_r/\mathcal M\).  Exact comparison gives
\begin{equation}
 \mathcal M<1.0000000000000001.
 \label{eq:mass-upper}
\end{equation}
The combined tail bound for \(K_1,\ldots,K_4,K_6\) is
\begin{equation}
 T_{\mathrm{fix}}
 \colonequals
 \sum_{r=1}^4|w_r|T_r
 +|w_6|\frac{523}{2{,}000{,}000}
 \stackrel{\eqref{eq:low-degree-tail}\wedge\eqref{eq:weighted-origin-tail}}{<}1.040224125770398\cdot10^{-7}.
 \label{eq:fixed-tail}
\end{equation}
Let \(\widehat E=(0,\widehat q_1,\widehat q_2,\ldots)\) be the nonlinear
coefficient vector of \(\widehat Q\).  The prefix, \(\tau_5\), and
fixed-tail estimates give
\begin{equation}
 \|\widehat E\|_1
 \stackrel{\eqref{eq:slope-bound}\wedge\eqref{eq:high-degree-total-tail}\wedge\eqref{eq:fixed-tail}}<1.242047998569650\cdot10^{-6}.
 \label{eq:E-bound}
\end{equation}

Since \(\widehat Q/\mathcal M\) is a convex combination of valid kernels by \eqref{eq:sparse-kernel-definition}, \eqref{eq:five-preprocessor-norms} and \eqref{eq:origin-kernel}, Lemma~\ref{lem:wiener} shows that it is a valid
kernel.  Its analytic representative has the form
\begin{equation}
 q(t)=\frac{\widehat q_0}{\mathcal M}t+r(t),
 \qquad
 \|r\|_{\A}=\frac{\|\widehat E\|_1}{\mathcal M}.
 \label{eq:normalized-candidate}
\end{equation}
Since \(\|\widehat E\|_1<\widehat q_0\) by \eqref{eq:slope-bound} and \eqref{eq:E-bound},
Lemma~\ref{lem:inverse-repair} applies.  It supplies an allowable common
preprocessor $\eta^{\rm phys}$ and an exactly linear valid kernel \(K(z)=\Gamma z\), where
one may take
\begin{equation}
 \Gamma
 \colonequals\frac{\widehat q_0-\|\widehat E\|_1}{\mathcal M}.
 \label{eq:improved-slope}
\end{equation}
Then the deliberately rounded bounds
\eqref{eq:slope-bound}, \eqref{eq:mass-upper}, and \eqref{eq:E-bound} give
\begin{equation}
 \Gamma>
 \frac{0.7117964080564-1.24204800\cdot10^{-6}}
      {1.0000000000000001}
 >0.7117951660083
 >\frac1{1.404898555}.
 \label{eq:coarse-sparse-slope}
\end{equation}

Then displayed bounds
\eqref{eq:slope-bound} and \eqref{eq:E-bound} together with \eqref{eq:branch-mass} give the sharper bound
\begin{equation}
 \Gamma>0.71179516600840569540.
 \label{eq:directed-final-slope}
\end{equation}

Lemma~\ref{lem:linear} now gives
$
 K_G^{\C}\leq\Gamma^{-1}
 <1.404898555.
$
In fact the code verifies that
\begin{equation}
 \Gamma^{-1}<1.404898554745441.
 \label{eq:directed-reciprocal}
\end{equation}
This proves the upper bound of Theorem~\ref{thm:main} since \cite{Haagerup} showed $K_G^\C\leq\gamma_H^{-1}$, $\gamma_H<0.71178980668499814812$, so $\gamma_H^{-1}>1.404909132735795$, and $\gamma_H^{-1}-\Gamma^{-1}>1.057799\cdot 10^{-5}$.
\end{proof}

\section{The Lower Bound}

In this section, function norms and expectations use standard complex Gaussian measure, with density \(\pi^{-n}e^{-\|z\|^2}\) on \(\C^n\).
We write \(\gamma_1\) for this measure when \(n=1\).

Recall that the Hermite space $\Hcal_{p,q}$ consists of the Wick-ordered polynomials
of holomorphic degree $p$ and antiholomorphic degree $q$; these spaces
are mutually orthogonal. Write $P_{p,q}$ for their orthogonal projections
and set $P=P_{1,0}$, $Q=P_{2,1}$. Throughout
\[
 \epsilon=\frac{37}{50},\qquad \rho=\frac{49}{50},\qquad T=T_n=P-\epsilon Q.
\]
On vector-valued functions, \(T_n\) acts on each coordinate.
The lower bound of Theorem \ref{thm:main} follows from the following.
\begin{theorem}\label{thm:main2}
\[
 \sup_{n\geq1}\norm{T}_{L_{\infty}(\C^n)\to L_{1}(\C^n)}<\frac{2857}{4000}<\frac57.
\]
\end{theorem}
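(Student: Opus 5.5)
The plan is to bound $\norm{T}_{\infty\to1}=\sup_{f,g}\Rea\E[\,Tf\,\overline g\,]$, where the supremum is over $f,g\colon\C^n\to\overline\D$, by a quantity that depends only on a few scalar parameters and not on $n$. The guiding example is $f=g=\ph(z_1)$. By the Laguerre computation in Section~\ref{sec:tail}, its charge-one Hermite coefficients are $\sqrt{h_k}$, so
\[
\E[Tf\,\overline g]=h_0-\epsilon h_1=\tfrac{\pi}{4}-\tfrac{37}{50}\cdot\tfrac{\pi}{32}\approx0.712 .
\]
This is just below $2857/4000$. So the proof must capture the mechanism by which a large first-degree component $Pf$ forces a large $(2,1)$ component $Qf$, rather than estimating $P$ and $\epsilon Q$ separately. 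Separate estimation would only give about $\pi/4$.

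\emph{Step 1 (structure of the projections).} Write $Pf(z)=\ip{z}{a}$ with $a=\E[f(Z)Z]$, and similarly $Pg(z)=\ip{z}{b}$. Since $T$ commutes with the unitary group of $\C^n$, we may rotate so that $a,b$ span at most $\C^2$. For a unit vector $u$, let
\[
\psi_u(z)\colonequals \ip{z}{u}\bigl(|\ip{z}{u}|^2-2\bigr)/\sqrt2 ,
\]
the normalized Wick polynomial in $\Hcal_{2,1}$ lying along the direction $u$; this is $\phi_1(\ip{z}{u})$. Decompose $Qf=\beta_f\psi_{u}+R_f$ and $Qg=\beta_g\psi_{v}+R_g$, where $u=a/|a|$, $v=b/|b|$, and the remainders are orthogonal to the respective $\psi$.

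\emph{Step 2 (duality and reduction to one complex variable).} For fixed $f$, the optimal $g$ is $\ph(Tf)$, so $\E|Tf|$ is what must be bounded. I would control it through the dual pairing with test functions of the form $\alpha\ip{z}{u}+\beta\psi_u$. The key point is that $\sup_{|f|\le1}\Rea\E[f\,\overline{(\alpha\ip{Z}{u}+\beta\psi_u)}]=\E|\alpha\ip{Z}{u}+\beta\psi_u(Z)|$, and this depends only on the one-dimensional standard complex Gaussian $\ip{Z}{u}$. That yields an explicit function $\Phi(\alpha,\beta)$ given by a radial integral over $s=|w|^2$.

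Combining these constraints for $f$ and $g$ with the identity
\[
\Rea\E[Tf\,\overline g]=\Rea\bigl[\ip{a}{b}-\epsilon\ip{Qf}{Qg}\bigr],
\]
and bounding the cross terms $\ip{R_f}{R_g}$ and $\ip{\psi_u}{\psi_v}$ (which has modulus $|\ip uv|^2\,|\ip uv|$-type dependence via Mehler, $\ip{\psi_u}{\psi_v}=\ip uv|\ip uv|^2$) by Cauchy--Schwarz together with Bessel's inequality $|a|^2+\|Qf\|_2^2\le1$, gives an upper bound. This bound is a maximum over finitely many real parameters: $|a|,|b|,\beta_f,\beta_g,\|R_f\|,\|R_g\|$ and $|\ip uv|$. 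Such a bound is automatically uniform in $n$.

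\emph{Step 3 (certification).} The resulting finite-dimensional maximization involves the integrals $\Phi$. I would smooth them with the Mehler operator at parameter $\rho=49/50$, replacing $\psi$-type test functions by $T_\rho$ of them, which costs a factor $\rho^{p+q}$ on each component. This makes the integrands Lipschitz in the parameters, and the maximization can then be certified on a finite grid with interval arithmetic, with explicit Lipschitz error, to lie below $2857/4000$.

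\emph{Main obstacle.} I expect the hardest part to be Step 2, specifically the remainders $R_f,R_g$ and the sign-indefinite term $-\epsilon\ip{Qf}{Qg}$. A crude Cauchy--Schwarz bound there may lose more than the margin $2857/4000-0.712\approx0.0023$. If it does, I would first try writing $\ip{Tf}{g}\le\tfrac12\bigl(\ip{Tf}{f}+\ip{Tg}{g}\bigr)+\tfrac12\ip{T(f-g)}{f-g}^{-}$-type symmetrizations to reduce to the diagonal quadratic form $\|Pf\|^2-\epsilon\|Qf\|^2$. The diagonal case is handled exactly by the one-variable functional of Step 2, since only $\beta_f\psi_u$ competes with $\ip{z}{a}$ there and $R_f$ only decreases the form.
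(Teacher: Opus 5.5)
Your Step~2 relaxation cannot reach $2857/4000$, and the fallback in your last paragraph does not repair it.

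\textbf{Step~2 is too weak.} Apart from the $\Phi$-constraint on $(\lvert a\rvert,\beta_f)$, the only information you keep about $R_f,R_g$ is Bessel's inequality. Nothing ties $R_f$ to $R_g$ or to $(\lvert a\rvert,\beta_f)$. Take $u=v$, $\lvert a\rvert=\lvert b\rvert=\sqrt\pi/2$ and $\beta_f=\beta_g=\sqrt{h_1}$ (the values for $\ph(z_1)$). Then set $R_g=-R_f$ with $\norm{R_f}_2^2=1-\pi/4-\pi/32$. Every constraint you list holds, and the relaxed objective is $363\pi/1600+\epsilon(1-9\pi/32)\approx0.80$. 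Even the choice $a=b=0$, $\beta_f=\beta_g=0$, $\norm{R_f}_2=1$, $R_g=-R_f$ already gives $\epsilon=0.74$. Replacing Bessel by the stronger bound $\norm{Qf}_2\le49/50$ does not exclude the first example. What is missing is a coupling: a near-extremal $\lvert a\rvert$ forces $f\approx\ph(z_1)$ and hence $R_f\approx0$, and a list of scalar norms cannot express this.

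\textbf{The symmetrization fallback fails structurally.} $T$ is self-adjoint with eigenvalue $-\epsilon$ on $\Hcal_{2,1}$, so the symmetrization is the exact identity
\[
 \Rea\ip{Tf}{g}=\tfrac12\bigl(\ip{Tf}{f}+\ip{Tg}{g}\bigr)+\tfrac12\bigl(\epsilon\norm{Q(f-g)}_2^2-\norm{P(f-g)}_2^2\bigr).
\]
Its last term is generally positive and cannot be dropped. That term is the whole difficulty, and your diagonal analysis of $\norm{Pf}_2^2-\epsilon\norm{Qf}_2^2$ says nothing about it. (Incidentally, the margin over $\ph(z_1)$ is $2857/4000-363\pi/1600\approx0.0015$, not $0.0023$.)

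\textbf{What is missing.} You need to use that $f$ and $g$ are unimodular \emph{simultaneously and pointwise}. You also need to control $\Hcal_{2,1}$-mass in directions transverse to the linear parts, uniformly in $n$. The paper's argument runs as follows.
\begin{itemize}
\item It reduces to unimodular charge-one $f,g$ and sets $h=(f+g)/2$, $k=(f-g)/2$. Then $\lvert h\rvert^2+\lvert k\rvert^2=1$ and $\Rea(\bar hk)=0$, and $\Rea\ip{f}{Tg}=t^2-\epsilon\norm{Qh}_2^2+\epsilon\norm{Qk}_2^2-\norm{Pk}_2^2$ with $Ph=tz$.
\item It expands $k$ in the variable $z$ against $1,A,B,z,H,\bar z$, with coefficient functions of $w\in\C^{n-1}$.
\item Positive semidefinite profile matrices $E,R$ dominate $\epsilon\norm{Qk}_2^2-\norm{Pk}_2^2$ by a fiberwise quadratic form plus a transverse residual $\delta\norm{Q_wc_1}_2^2$.
\item Lagrangian duality turns the fiber problem into a one-variable convex potential $\Psi$. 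In $\Psi$ the pointwise link between $h$ and $k$ is built in.
\item Bounding $\E_w\Psi(\lvert q\rvert)$ for $q=\delta Q_wc_1\in\Hcal_{2,1}(\C^{n-1})$ requires genuine distributional information on $\Hcal_{2,1}$. This is $2\le\E\lvert q\rvert^4\le62$, a sixth-moment bound, and $\E\lvert q\rvert\le\tfrac{49}{50}$ for $\norm{q}_2=1$, all from the cumulant inequality $\kappa_6\le68\kappa_4^{3/2}$.
\end{itemize}
Your proposal has no analogue of the pointwise $h$--$k$ coupling or of the $\Hcal_{2,1}$ moment control.

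\textbf{Step~3.} $\rho=49/50$ is this $L^1/L^2$ ratio bound, not a Mehler noise parameter. Applying $T_\rho$ to $\alpha\ip{z}{u}+\beta\psi_u$ merely rescales $(\alpha,\beta)$ to $(\rho\alpha,\rho^3\beta)$, so that step changes nothing. In any case $\Phi$ is already Lipschitz in its parameters.
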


\begin{proof}[Proof of lower bound of Theorem \ref{thm:main} assuming Theorem \ref{thm:main2}]
Here \(Z\) is a standard complex Gaussian vector in \(\C^n\).  The first inequality below follows from Grothendieck's inequality (see e.g. \cite{Heilmanb,jones26}).
\begin{equation}\label{klb}
\KG\geq\sup_{n\geq1}
\,\frac{\sup_{g\colon\C^{n}\to S(\C^n)}\E\|Tg(Z)\|_{\ell_{2}(\C^{n})}}{\sup_{f\colon\C^{n}\to S(\C)}\E|Tf(Z)|}
\geq\frac{1}{\sup_{n\geq1}\norm{T}_{L_{\infty}(\C^n)\to L_{1}(\C^n)}}.
\end{equation}
For the second inequality, take \(g(z)=z/\|z\|\) for \(z\neq0\), with any unit value at zero.  Then
\[
 \left\|g-\frac{z}{\sqrt n}\right\|_2^2
 =\E\left(1-\frac{\|Z\|}{\sqrt n}\right)^2
 \leq\E\left(1-\frac{\|Z\|^2}{n}\right)^2
 =\frac1n.
\]
Since \(T_n(z/\sqrt n)=z/\sqrt n\) and \(\|T_n\|_{2\to2}=1\),
\(\|T_ng-g\|_2\leq2/\sqrt n\).  Thus \(\E\|T_ng(Z)\|\to1\) as \(n\to\infty\), proving the second inequality.
Consequently $K_G^{\C}>4000/2857>7/5$ follows by Theorem \ref{thm:main2}.
\end{proof}

We now proceed to prove Theorem \ref{thm:main2}.  Let $f,g\colon\C^n\to S(\C)$.  We will upper bound $\Re\langle f,T_ng\rangle$.  Since $T_n$ is selfadjoint with charge-one range, successive replacements
$f=\operatorname{phase}(T_ng)$ and $g=\operatorname{phase}(T_nf)$
do not decrease $\Rea\langle f,T_ng\rangle$, so we may assume that $f,g$ are charge-one.  (If either polynomial whose
phase is taken is identically zero, then $\Re\langle f,T_ng\rangle\leq0$ and Theorem \ref{thm:main2}
is immediate. Otherwise its zero set has Gaussian measure zero, so its phase is charge-one
a.e.)  Put $h\colonequals(f+g)/2$, $k\colonequals(f-g)/2$.
Denote $t\colonequals\|Ph\|_2$.  Then
\begin{equation}\label{eq:split}
 \begin{gathered}
 |h|^2+|k|^2=1,\qquad \Re(\bar h k)=0,\\
 \Re\langle f,T_ng\rangle
 =t^2-\epsilon\norm{Qh}_2^2+\epsilon\norm{Qk}_2^2-\norm{Pk}_2^2.
 \end{gathered}
\end{equation}
%
Since $|h|\le1$, we have $0\leq t\leq\sup_{\|r\|_2\leq1}|\langle Ph,r\rangle|
=\sup_{\|r\|_2\leq1}|\langle h,Pr\rangle|
\leq \E|W|=\sqrt{\pi}/2<9/10$,
where $W\in\C$ is a standard complex Gaussian.

Section~\ref{sec:moments} proves that, for every $q\in\Hcal_{2,1}$ with $\E|q|^2=1$ and $m\colonequals\E|q|^4$,
\begin{equation}\label{eq:moment-summary}
 2\le m\le62,\qquad
 \E|q|^6\le9m-12+50(m-2)^{3/2},\qquad
 \E|q|\le\rho.
\end{equation}
In particular $\norm{Qu}_2\le\rho$ for any $u\colon\C^n\to \C$ with $|u|\leq1$.
Discarding the negative terms in \eqref{eq:split} gives $\Re\langle f,T_ng\rangle\leq t^2+\epsilon\rho^2=t^2+.710696$.

\section{Certifying the Lower Bound}

\subsection{Reduction to a scalar potential}
Write a point of $\C^n$ as $(z,w)\in\C\times\C^{n-1}$.  (If $n=1$, $w$ can be a single point.)  Recall $t\colonequals\|Ph\|_2$.  By applying a unitary transformation, we may assume $P h(z,w)=tz$.
Denote
\[
 U\colonequals|z|^2,\qquad 
 A\colonequals U-1,\qquad 
 B\colonequals\frac{z^2}{\sqrt2},\qquad
 H\colonequals\frac{z(U-2)}{\sqrt2}.
\]
The functions $1,A,B,z,H,\bar z$ are orthonormal in $L^2(\gamma_1;\C)$.  Recalling $k=(f-g)/2$, set
\[
 c_D(w)\colonequals\mathbb E_z[\overline{D(z)}k(z,w)],\,\,\forall\,w\in\C^{n-1}\qquad
 \xi\colonequals(c_1,c_A,\overline{c_B})^T,\quad 
 \zeta\colonequals(c_z,c_H)^T,\quad 
 d\colonequals c_{\,\overline{z}}.
\]

A \emph{profile} consists of $0<\delta<\epsilon$, real $\ell,\beta$, and
real symmetric positive definite matrices $E\in\mathbb R^{3\times3}$ and
$R\in\mathbb R^{2\times2}$ satisfying
\begin{equation}\label{eq:simple-profile-PSD}
 E\succeq\operatorname{diag}(-1,\epsilon,\epsilon),\quad
 E\succeq\operatorname{diag}(\epsilon-\delta,0,0),\quad
 R\succeq\operatorname{diag}(-1,\epsilon),\quad
 R\succeq\operatorname{diag}(\epsilon,0).
\end{equation}
Write
\[
 \mathcal E(k)\colonequals\xi^*E\xi+\zeta^*R\zeta+\epsilon|c_{\overline{z}}|^2.
\]
Denote $Q_w$ as the operator $Q$ acting only on the $w$ coordinates.  Hermite orthogonality gives
\begin{equation}\label{eq:simple-fiber-domination}
 \epsilon\|Qk\|_2^2-\|Pk\|_2^2
 \le \mathbb E_w\mathcal E(k)+\delta\|Q_wc_1\|_2^2.
\end{equation}
To prove this, denote $\Pi_{p,q}$ as the $w$-variable Hermite projection, and write
$$Pk=\Pi_{1,0}c_1 +z\Pi_{0,0}c_z
,\qquad
Qk=\Pi_{2,1}c_1 +A\Pi_{1,0}c_A
+B\Pi_{0,1}c_B +z\Pi_{1,1}c_z
+H\Pi_{0,0}c_H +\overline{z}\Pi_{2,0}c_{\overline{z}}$$
In the expansion of \(Pk\), the term involving \(z\) is \(z\Pi_{0,0}c_z\), and the remaining terms are \(\Pi_{1,0}c_1\). Denote $\xi_{p,q}\colonequals\Pi_{p,q}\xi$.  Then $\E_w \xi^* E\xi =\sum_{p,q}\E_w\xi_{p,q}^* E \xi_{p,q}$.  Apply the first part of \eqref{eq:simple-profile-PSD} to $\xi_{1,0}$ and the second to $\xi_{2,1}$ (using also $\Pi_{1,0}\overline{c_B} = \overline{\Pi_{0,1}c_B}$) to get
\begin{equation}\label{five1}
\E_w \xi^* E\xi \geq
-\|\Pi_{1,0}c_1\|_{2}^{2}
+\epsilon\|\Pi_{1,0}c_A\|_{2}^{2}
+\epsilon\|\Pi_{0,1}c_B\|_{2}^{2}
+(\epsilon-\delta)\|\Pi_{2,1}c_1\|_{2}^{2}.
\end{equation}
Applying a similar argument to $R$ then gives
\begin{equation}\label{five2}
\E_w \zeta^* R\zeta \geq
-|\Pi_{0,0}c_z|^{2}
+\epsilon|\Pi_{0,0}c_H|^{2}
+\epsilon\|\Pi_{1,1}c_z\|_{2}^{2}.
\end{equation}
Adding \eqref{five1} and \eqref{five2} to $\epsilon\E_w|c_{\overline{z}}|^2\geq\epsilon\|\Pi_{2,0}c_{\overline{z}}\|_2^2$ and adding $\delta\|Q_wc_1\|_2^2$ proves \eqref{eq:simple-fiber-domination}.

Put $L(z)\colonequals z(\ell+\beta(U-2))$. Since $\mathbb E\bar z h=t$ and
$\|z(U-2)\|_2^2=2$, we have
\[
0\leq\epsilon\|Qh+\beta\sqrt{2}H/\epsilon\|_2^2
=\epsilon\|Qh\|_2^2+2\Re \E[\overline{\beta z(U-2)}Qh]
+2\beta^2\|H\|_2^2 / \epsilon.
\]
That is,
\begin{equation}\label{qhineq}
 -\epsilon\|Qh\|_2^2
 \le 2\Re\,\mathbb E[\overline{\beta z(U-2)}h]
       +\frac{2\beta^2}{\epsilon}.
\end{equation}
Define the particular residual function
\begin{equation}\label{qsdeq}
 q\colonequals\delta Q_wc_1,\qquad \sigma\colonequals\|q\|_2\le\delta\rho,\qquad
 \delta\|Q_wc_1\|_2^2
 =2\Re\E_w\bar q c_1-\sigma^2/\delta.
\end{equation}
The bound on $\sigma$ follows from $|c_1|\le1$ and
\eqref{eq:moment-summary}.  The equality follows since $q$ is in the range of $Q_w$ and $c_1=Q_w c_1 + (I-Q_w)c_1$, so $\E_w \bar q c_1=\E_w \bar q Q_w c_1=\delta \|Q_w c_1\|_2^2\in\R$ and $\sigma^2/\delta=\delta\|Q_w c_1\|_2^2$.

For a complex number $\omega\in\C$, define the scalar potential
\begin{equation}\label{eq:simple-potential-primal}
 \Psi(\omega)\colonequals\frac{2\beta^2}{\epsilon}
 +\sup_{f_0,g_0\colon\C\to S(\C)}
 \Big(\mathcal E(k_0)+2\Re\mathbb E_z[\bar L h_0]+2\Re\E_z [\bar\omega k_0(z)]\Big).
\end{equation}
Here $f_0,g_0$ depend only on $z$, with $h_0\colonequals(f_0+g_0)/2$, $k_0\colonequals(f_0-g_0)/2$.
Bessel's inequality makes the supremum finite.
We claim that $\Psi(\omega)=\Psi(|\omega|)$, which follows since the map
\[
 (f_0(z),g_0(z))\mapsto
 e^{-i\theta}(f_0(e^{i\theta}z),g_0(e^{i\theta}z)),
\]
multiplies $\xi$ by $e^{-i\theta}$, $\zeta$ is fixed, and $d$ is multiplied by
$e^{-2i\theta}$. Hence the $\mathcal E$ and $L$ terms in \eqref{eq:simple-potential-primal} stay fixed while the coefficient of $\overline\omega$ can be rotated by any phase. Substituting \eqref{qhineq} and \eqref{qsdeq} into \eqref{eq:split}, then using
\eqref{eq:simple-fiber-domination} and this observation with $\omega\colonequals q(w)$,
\begin{equation}\label{eq:simple-moment-target}
 \Re\langle f,T_ng\rangle
 \le t^2-2\ell t+\mathbb E_w\Psi(|q|)-\frac{\sigma^2}{\delta}.
\end{equation}

The potential $s\mapsto\Psi(s)$, $s\in\R$, is convex and $2$-Lipschitz: it is a supremum of affine
functions with slopes $2\Re c_1\in[-2,2]$.
It is even by the claim, hence nondecreasing on
$[0,\infty)$.

\subsection{Finite optimization}\label{sec:certificate}
We optimize \eqref{eq:simple-moment-target}.  Set $K\colonequals(\Rea\xi,\operatorname{Im}\xi,\Rea\zeta,\operatorname{Im}\zeta,
\Rea d,\operatorname{Im}d)^T\in\R^{12}$.
Then $\mathcal E(k_0)=K^TMK$ and
$K_j=\Re\E_z\overline{\mathcal B_j}k_0$ for all $1\leq j\leq 12$ where
\[
 M\colonequals\diag(E,E,R,R,\epsilon I_2)\in\R^{12\times12},\qquad
 \mathcal B\colonequals(1,A,B,\ i,iA,-iB,\ z,H,\ iz,iH,\ \bar z,i\bar z).
\]
Let $v\in\R^{12}$.  Note that $2v^T K - v^T M^{-1}v=K^T MK-(v-MK)^T M^{-1}(v-MK)\leq K^{T}MK$ with equality when $v=MK$, so $K^TMK=\sup_{v\in\R^{12}}(2v^TK-v^TM^{-1}v)$.  Adding $2v^T K$ to the last two terms in \eqref{eq:simple-potential-primal} yields $2\Re\E_z[\overline{\omega+\mathcal Bv}k_0 + \bar Lh_0]=\Re\E_z[\overline{\omega+\mathcal B v+L}f_0+\overline{-\omega-\mathcal B v+L}g_0]$. 
For any $w>0$, we have $0\leq(|a|-w)^2=|a|^2 - 2|a|w+w^2$, so that $|a|\le(w+|a|^2/w)/2$.  Applying this for positive step functions $w_+,w_-$, gives in \eqref{eq:simple-potential-primal}, for any $s\in\R$
\begin{equation}\label{eq:price}
 \Psi(s)\le\sup_{v\in\mathbb R^{12}}R_s(v),\qquad
 R_s(v)\colonequals\frac{2\beta^2}{\epsilon}
 +\frac12\E_z\sum_{\eta\in\{-1,1\}}
 \left(w_\eta+\frac{|s+\mathcal Bv+\eta L|^2}{w_\eta}\right)-v^TM^{-1}v.
\end{equation}
Then $R_s(v)$ is a quadratic polynomial in $v$. At each rational node $s_j$, the
code expands it as $c_j+2b_j^Tv-v^TD_jv$ and certifies
$
 D_j\succ0$, $c_j+b_j^TD_j^{-1}b_j\le A_j.
$
Then $\Psi(s_j)\le A_j$ since $R_{s_j}(v)=c_j+b_j^T D_j^{-1}b_j -(v-D_j^{-1}b_j)^{T}D_j(v-D_j^{-1}b_j)\leq c_j+b_j^T D_j^{-1}b_j$.  Convexity of $\Psi$ bounds $\Psi$ below the line segment between consecutive nodes
$0=s_0<\cdots<s_J=S$; the $2$-Lipschitz bound gives
$\Psi(s)\le A_J+2(s-S)$ for $s\ge S$.

The code supplies polynomials
$p(s)=a_0+a_1s+a_2s^2+a_4s^4+a_6s^6$, with $a_1,a_6\ge0$,
lying above these line segments and the entire affine tail. Thus $p\ge\Psi$.
For $\sigma>0$ put $x^2\colonequals\E|q/\sigma|^4-2$. By \eqref{eq:moment-summary},
\begin{equation}\label{eq:moment-box}
 \begin{split}
 \E\Psi(|q|)-\sigma^2/\delta\le J_p(\sigma,x)
 \colonequals{}&
 a_0+\rho a_1\sigma+(a_2-\delta^{-1})\sigma^2\\
 &+a_4(2+x^2)\sigma^4+a_6(6+9x^2+50x^3)\sigma^6.
 \end{split}
\end{equation}
Here $0\le\sigma\le\delta\rho$, $0\le x\le\sqrt{60}<31/4$.
If $\sigma=0$, then $q=0$ and $\E\Psi(|q|)\le p(0)$.
The coefficients $a_2,a_4$ may have either sign because those moments are exact.
On each rectangle of a complete cover of $[0,\delta\rho]\times[0,31/4]$,
one such polynomial is checked to satisfy $J_p\le C$. For fixed $\sigma$,
the $x$-dependence is $ax^2+bx^3$ with $b\ge0$, so its maximum is at an endpoint by the first derivative test.  Consequently \eqref{eq:simple-moment-target} becomes
\begin{equation}\label{eq:parabola}
 \Re\langle f,T_ng\rangle\le t^2-2\ell t+C.
\end{equation}

The table gives the nine certified pairs $(\ell_i,C_i)$ and, at $i=0$,
the elementary pair $(0,\epsilon\rho^2)$. All decimals are exact rationals;
the remaining profile parameters and full certificates are supplied with the verifier code.
\begin{center}\small
\begin{tabular}{rrr@{\qquad}rrr}\toprule
$i$&$\ell_i$&$C_i$&$i$&$\ell_i$&$C_i$\\\midrule
0&0&0.710696&5&0.610085330&1.080649\\
1&0.121848821&0.725165&6&0.697084166&1.197222\\
2&0.232185104&0.760598&7&0.782190305&1.325111\\
3&0.333426881&0.816862&8&0.833308193&1.408127\\
4&0.492185281&0.946281&9&0.884543194&1.495777\\\bottomrule
\end{tabular}
\end{center}
Set
$
 \tau_0\colonequals0$, $
 \tau_i\colonequals\frac{C_i-C_{i-1}}{2(\ell_i-\ell_{i-1})}$, $\forall$ $1\le i\le9$, $\tau_{10}\colonequals9/10.$
The checker verifies that these endpoints increase and that
$F_i(t)=t^2-2\ell_i t+C_i$ is strictly below $2857/4000$ at both
endpoints of $[\tau_i,\tau_{i+1}]$. Convexity proves the same bound on
each interval, completing Theorem~\ref{thm:main2}.

\subsection{The cubic moment lemma}\label{sec:moments}
\begin{proof}[Proof of \eqref{eq:moment-summary}]
We first prove $\kappa_6\le68\kappa_4^{3/2}$ in \eqref{kineq}.  We then take $X=\sqrt2\Re q$ for $q\in\Hcal_{2,1}$ with $\|q\|_2=1$, derive the moment bounds in \eqref{eq:moment-summary}, and conclude with $\E|q|\le49/50$.

A real symmetric tensor $f$ of order $3$ on $\R^n$ is an array $f=(f_{abc})_{1\leq a,b,c\leq n}$ whose entries are invariant under permutation, e.g. $f_{abc}=f_{acb}=f_{bca}=f_{bac}=f_{cba}=f_{cab}$. Let $Y_1,\ldots,Y_n$ be independent real standard Gaussian random variables.  The Wick cubic $I_3(f)$ is $I_3(f)=\sum_{1\leq a,b,c\leq n}f_{abc}(Y_aY_bY_c - 1_{\{a=b\}}Y_c- 1_{\{a=c\}}Y_b- 1_{\{c=b\}}Y_a)$.  Denote $\|f\|^2\colonequals\sum_{1\leq a,b,c\leq n}f_{abc}^2$.  For a real symmetric tensor $f$, denote $X\colonequals I_3(f)$. Define
\[
 F_{ab,c}\colonequals f_{abc},\quad 
 G\colonequals F^TF,\quad 
 S\colonequals \operatorname{sym}_4(FF^T),\quad
 s\colonequals \|G\|^2,\quad 
 \]
 \[
 h\colonequals \|S\|^2,\quad 
 D\colonequals FG,\quad 
 U\colonequals SF,
\]
where $\operatorname{sym}_4$ averages all permutations of the four tensor indices
and norms are Hilbert--Schmidt, and $F$ is treated as an $n^2\times n$ matrix. Also put
\[
 V\colonequals\|D\|^2=\operatorname{tr}(G^3),\qquad B_0\colonequals\|F^T\operatorname{vec}(G)\|^2,\qquad 
 A_0\colonequals\sum_{a,b,c,d,e=1}^nf_{abc}f_{dec}G_{ad}G_{be}.
\]
Write $\kappa_4\colonequals\E X^4-3(\E X^2)^2$ and
$\kappa_6\colonequals\E X^6-15\E X^4\E X^2+30(\E X^2)^3$.  We check
\begin{equation}\label{eq:wick}
 \begin{split}
 \kappa_4&=1296(s+3h/2),\\
 \frac{\kappa_6}{1296^{3/2}}
 &=\frac{135}{2}\operatorname{tr}(S^3)+25V+\frac{45}{2}B_0
   +\frac{165}{2}\langle D,U\rangle+45\|U\|^2+45(A_0-V).
 \end{split}
\end{equation}
Wick's rule and cumulant cancellation express $\kappa_r$ as the sum over
connected loopless cubic multigraphs on $r$ labelled vertices.
An edge is a summed tensor index; a graph with multiplicities $e_{ij}$
has weight $(3!)^r/\prod_{i<j}e_{ij}!$.
The program enumerates all such graphs for $r=4,6$, expands the right-hand
sides of \eqref{eq:wick} into the same contractions, and compares exact
rational coefficients after relabelling vertices. Symmetry of $f$ makes
this a formal identity, with no dimension bound or numerical tensor tests.

Since $\mathrm{sym}_4$ averages over norm-preserving index permutations, $h=\|\mathrm{sym}_4(FF^T)\|^2\leq\|FF^T\|^2=s$, so $0\le h\le s$.  If $G$ has eigenvalues $(\lambda_i)$, then $s=\sum\lambda_i^2$, so $\|F\|_{\mathrm{op}}^2=\|G\|_{\mathrm{op}}=\max\lambda_i\leq\sqrt{s}$.
Cauchy--Schwarz and the eigenvalue bounds give
\[
 A_0\le V\le s^{3/2},\quad B_0\le s^{3/2},\quad
 \operatorname{tr}(S^3)\le h^{3/2},\quad
 \|U\|^2\le h\sqrt s,\quad \langle D,U\rangle\le s\sqrt h.
\]
For the first inequality, diagonalize $G$: then
$A_0=\sum_{ijk}\lambda_i\lambda_j f_{ijk}^2\le\sum_{ijk}\lambda_k^2f_{ijk}^2=V$
by $2\lambda_i\lambda_j\le\lambda_i^2+\lambda_j^2$ and symmetry.  Then $V=\sum \lambda_i^3\leq\max\lambda_i \cdot s\leq s^{3/2}$.  Next, $B_0\leq\|F\|_{\mathrm{op}}^2\|G\|^2\leq s\cdot\sqrt{s}$.  The $\mathrm{tr}(S^3)$ bound follows similarly by diagonalizing $S$.  The $\|U\|^2$ bound mimics the $B_0$ bound.  The final bound follows by Cauchy--Schwarz.  Thus, with $y\colonequals\sqrt{h/s}\in[0,1]$,
\begin{equation}\label{kineq}
 \frac{\kappa_6}{\kappa_4^{3/2}}
 = \frac{\kappa_6/s^{3/2}}{\kappa_4^{3/2}/s^{3/2}}
 \stackrel{\eqref{eq:wick}}{\le}\frac{N(y)}{(1+3y^2/2)^{3/2}}<68,\qquad
 N(y)\colonequals
 \frac{135}{2}y^3
 +\frac{95}{2}+\frac{165}{2}y+45y^2.
\end{equation}
The last step is a rational polynomial check:
$68^2(1+3y^2/2)^3-N(y)^2>0$ on $[0,1]$.
If $s=0$, then $f=0$ and the cumulant inequality is immediate.

For any $q\in\Hcal_{2,1}$ with $\|q\|_2=1$, circular symmetry makes
$X=\sqrt2\Re q=(q+\bar q)/\sqrt{2}$ a normalized real third-chaos variable, with 
$\E X^2=1$, $\E X^4=(3/2)m$, $\E X^6 = (5/2)\E|q|^6$, so
\begin{equation}\label{k46}
 \kappa_4=\tfrac32(m-2),\qquad
 \kappa_6=\tfrac52(\E|q|^6-9m+12),\qquad m\colonequals\E|q|^4.
\end{equation}
Since $\kappa_4\geq0$ by \eqref{eq:wick}, it follows that $m\ge2$.  Also
$\E|q|^6\le9m-12+50(m-2)^{3/2}$ by \eqref{kineq} and \eqref{k46}, using $68(3\sqrt6/10)<50$.
Also $h\le s\le(\operatorname{tr}G)^2=\|f\|^4=1/36$ since $1=\E X^2=3!\|f\|^2$.  So
$\kappa_4\le90$ by \eqref{eq:wick} and therefore $m\le62$ by \eqref{k46}.

Finally, put $Y\colonequals|q|$, $\mu=\E Y$ and $n_6\colonequals\E Y^6$.
Cauchy--Schwarz, the triangle inequality, $\E Y^4(Y+1)^2 = \E Y^6+2\E Y^5 + \E Y^4$, $\E Y^5\leq\sqrt{\E Y^6 \E Y^4}$ and $2\sqrt{n_6m}\le n_6/6+6m$ give
\begin{equation}\label{m1eq}
\begin{aligned}
 (m-1)^2
 &=\big(\E(Y-1)[Y^2(Y+1)]\big)^2
 \leq \E(Y-1)^2 \cdot\E Y^4(Y+1)^2\\
 &\le2(1-\mu)(\sqrt{n_6}+\sqrt m)^2
 \le2(1-\mu)(7n_6/6+7m).
\end{aligned}
\end{equation}
For $x=\sqrt{m-2}$, the $\E|q|^6$ bound of \eqref{eq:moment-summary} gives $n_6\leq 6+9x^2+50x^3$, so
\begin{equation}\label{m1eq2}
\begin{aligned}
 25(m-1)^2-7n_6/6-7m
 &\ge25x^4-\tfrac{175}{3}x^3+\tfrac{65}{2}x^2+4\\
 &=25(x^2 - 7x/6-1/12)^2
 +(95/36)(x-35/38)^2+181/114>0.
 \end{aligned}
\end{equation}
Combining \eqref{m1eq} and \eqref{m1eq2} yields $1-\mu\geq\frac{(m-1)^2}{2((7/6)n_6 + 7m)}\geq1/50$, so $\mu\le49/50$. 
For $q=Qu$ and $|u|\le1$,
$\|q\|_2^2=\Re\E\bar q u\le\E|q|\le(49/50)\|q\|_2$, completing \eqref{eq:moment-summary}.

\end{proof}

\section{Supporting Numerical Codes}

The numerical portion of the upper bound of Theorem \ref{thm:main} can be run with the commmand: \verb!python3 audit_best7_full.py! with the codes available at: \url{https://github.com/sheilman77/grothendieck_cx}.  The numerical portion of the lower bound for Theorem \ref{thm:main} can be run with the command: \verb!python3 check_complex_lower.py!.

\noindent
\textbf{Acknowledgements}.  GPT-6 Astra assisted in the preparation of this manuscript.

S.H. is supported by NSF Grant CCF AF 2448108.  G.M.\ is supported by the European Research Council through an ERC Starting Grant (Grant agreement No.~101077455, ObfusQation) and funded by the Deutsche Forschungsgemeinschaft (DFG, German Research Foundation) under Germany's Excellence Strategy - EXC 2092 CASA – 390781972.

\end{document}